\documentclass[letterpoint,12pt]{article}
\usepackage{amsmath,amssymb,amsthm}
\usepackage{graphicx}
\usepackage{multicol}

\newtheorem{theorem}{Theorem}

\newtheorem{lemma}[theorem]{Lemma}

\newtheorem{observation}[theorem]{Observation}

\newtheorem{question}[theorem]{Question}

\textwidth = 6.5 in
\textheight = 8.5 in
\oddsidemargin = 0 in
\evensidemargin = 0 in
\topmargin = 0.125 in
\headheight = 0.0 in
\headsep = 0.0 in

\begin{document}
\title{A graph for which the inertia bound is not tight}
\author{John Sinkovic\thanks{Department of Combinatorics and Optimization, University of Waterloo, ({\tt johnsinkovic@gmail.com})}
}
\date{\today}
\maketitle

\begin{abstract}
The inertia bound gives an upper bound on the independence number of a graph by considering the inertia of matrices corresponding to the graph.  The bound is known to be tight for graphs on 10 or fewer vertices as well as for all perfect graphs.  The question has been asked as to whether the bound is always tight.  We show that the bound is not tight for the Paley graph on 17 vertices as well as for the graph obtained from Paley 17 by deleting a vertex.
\end{abstract}

\bigskip

\noindent{\bf AMS 2010 subject classification:}~~05C50, 15A42

\bigskip

\noindent{\bf Keywords:}~~inertia bound, Cvetkovi\'c bound, independence number, weight matrix
\bigskip

\section{Introduction}\label{sec:intro}
In algebraic graph theory it is common to define a matrix or family of matrices using a simple graph.  Properties of a such a matrix or matrices are then analyzed to make connections to a property of the graph.  One such connection exists between the independence number $\alpha(G)$ and the eigenvalues of weight matrices $W$ corresponding to $G$.  A weight matrix $W$ of $G$ is a real symmetric matrix such that $w_{ij}=0$ if $ij$ is not an edge of $G$.  For any weight matrix $W$ of $G$, 
$$\alpha(G)\leq\min\{|G|-n_+(W),|G|-n_-(W)\}.$$
(See Lemma 9.6.3 in \cite{godsilroyle}.) This upper bound on $\alpha$ is attributed to D.M. Cvetkovi\'c (PhD thesis 1971).  It is often referred to as the Cvetkovi\'c bound or inertia bound.

As with many inequalities, it is interesting to determine when equality occurs or in other words when the bound is tight.  The following question was posed by Chris Godsil\footnote{\emph{Interesting Graphs and their Colourings}, unpublished lecture notes C. Godsil (2004)} and appears in \cite{elzinga,gregory,rooney}.

\begin{question}\label{qu:the question} Does each graph $G$ have a weight matrix $W$  such that \center $\alpha(G)=\min\{|G|-n_+(W),|G|-n_-(W)\}$?
\end{question}

The inertia bound has been shown to be tight for small graphs \cite{elzinga, gregory} (all graphs on 10 or fewer vertices, vertex-transitive graphs on 12 or fewer vertices), certain families of Cayley and strongly regular graphs \cite{rooney}, and perfect graphs\footnote{\emph{Interesting Graphs and their Colourings}, unpublished lecture notes C. Godsil (2004)}.  The smallest vertex-transitive graph for which the tightness of the bound has not been determined is Paley 13 (\cite{elzinga, gregory, rooney}).  Paley 17 is the first graph for which a proof has been given showing that the inertia bound is not tight.

\section{Preliminaries}\label{sec:prelim}
Let $G=(V,E)$ be a simple graph of order $n$ with vertex set $V$ and edge set $E$.  A {weight matrix} $W$ of $G$ is an $n \times n$ real, symmetric matrix such that $w_{ij}=0$ if $(i,j)\not\in E(G)$.
As $G$ is simple, it has no loops, and $w_{ii}=0$ for all $i$.  In example, any multiple of the adjacency matrix of $G$ is a weight graph for $G$.  Let $n_+(W), n_-(W)$, and $n_0(W)$ denote the number of positive eigenvalues, the number of negative eigenvalues, and the multiplicity of zero as an eigenvalue of $W$, respectively.

Given any weight matrix $W$ of $G$, the Cvetkovi\'c bound or inertia bound states that\begin{equation}\label{eq:inertiabound}\alpha(G)\leq \min\{n-n_+(W),n-n_-(W)\}.\end{equation}
The following is an equivalent form of the inertia bound which will be useful to our discussion.  For any weight matrix $W$ of $G$,
\begin{equation}\label{eq:altinertiabound}\alpha(G)\leq n_0(W)+\min\{n_+(W),n_-(W)\}.
\end{equation} 

As our goal is to exhibit a graph $G$ for which the bound is not tight, we point out that in Equation (\ref{eq:altinertiabound}) equality cannot occur when both $n_+(W)$ and $n_-(W)$ are greater than $\alpha(G)$.   The claim is that all weight matrices for Paley 17 have at least 4 positive and 4 negative eigenvalues while the independence number is 3.  We prove the claim by looking at invertible principal submatrices corresponding to induced subgraphs of order 7.  Interlacing plays a big part in our proof and as such we include this well-known theorem.

\begin{theorem}\label{thm:alt interlacing}(Theorem 9.1.1 of \cite{godsilroyle}) Let $A$ be an $n\times n$ real symmetric matrix with eigenvalues $\lambda_1\geq \lambda_2\geq\ldots\geq\lambda_n$ and let $C$ be a $k\times k$ principal submatrix of $A$ with eigenvalues $\tau_1\geq \tau_2\geq \ldots\geq \tau_k$. Then $\lambda_i\geq \tau_i$ for all $i\in \{1,\ldots,k\}$.
\end{theorem}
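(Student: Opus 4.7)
The plan is to derive $\lambda_i \geq \tau_i$ from the Courant--Fischer min-max characterization of the eigenvalues of a real symmetric matrix. Recall that for any real symmetric $n\times n$ matrix $A$ with ordered eigenvalues $\lambda_1 \geq \cdots \geq \lambda_n$ one has
\begin{equation*}
\lambda_i \;=\; \max_{\substack{S \subseteq \mathbb{R}^n \\ \dim S = i}} \; \min_{\substack{v \in S \\ \|v\| = 1}} v^\top A v,
\end{equation*}
and the analogous formula holds for $\tau_i$ in terms of $i$-dimensional subspaces of $\mathbb{R}^k$ and the quadratic form of $C$. The key observation is that the subspaces available to $C$ are, after zero-padding, a special subfamily of those available to $A$, so the outer maximum for $A$ will dominate.

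Concretely, let $I \subseteq \{1,\dots,n\}$ with $|I|=k$ index the rows and columns of the principal submatrix $C$. For any $v \in \mathbb{R}^k$, write $\widetilde v \in \mathbb{R}^n$ for its zero-padded extension: coordinates in $I$ equal those of $v$, and coordinates outside $I$ are $0$. Then $\|\widetilde v\|=\|v\|$, and because the only entries of $A$ touched by such vectors are those indexed by $I \times I$, we have $\widetilde v^\top A \widetilde v = v^\top C v$. If $T \subseteq \mathbb{R}^k$ is an $i$-dimensional subspace, then $\widetilde T := \{\widetilde v : v \in T\}$ is an $i$-dimensional subspace of $\mathbb{R}^n$, since zero-padding is an injective linear map.

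Applying Courant--Fischer to $A$ and using $\widetilde T$ as one specific candidate $i$-dimensional subspace,
\begin{equation*}
\min_{\substack{v \in T \\ \|v\|=1}} v^\top C v \;=\; \min_{\substack{w \in \widetilde T \\ \|w\|=1}} w^\top A w \;\leq\; \max_{\substack{S \subseteq \mathbb{R}^n \\ \dim S = i}} \; \min_{\substack{w \in S \\ \|w\|=1}} w^\top A w \;=\; \lambda_i.
\end{equation*}
Taking the maximum on the left over all $i$-dimensional $T \subseteq \mathbb{R}^k$ then yields $\tau_i \leq \lambda_i$, which is the claim.

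The main obstacle is essentially bookkeeping rather than any substantive difficulty: one must verify that the embedding $T \mapsto \widetilde T$ preserves dimension and that the quadratic-form identity genuinely follows from the definition of a principal submatrix. I am implicitly quoting the Courant--Fischer theorem, which is not stated in the excerpt but is the standard tool used in Godsil--Royle's proof as well. An alternative route would be an inductive argument peeling off one row/column at a time using Weyl's inequalities for rank-$1$ perturbations, but the min-max approach is the most direct.
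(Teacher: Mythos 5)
Your proof is correct: the zero-padding embedding preserves norms, dimension, and the quadratic form, so the Courant--Fischer comparison goes through exactly as you describe. The paper itself states this interlacing theorem without proof, citing Theorem 9.1.1 of Godsil and Royle, and your min-max argument is essentially the standard one given there, so there is nothing further to compare.
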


 A weight matrix $W$ of $G$ is $\emph{optimal}$ if $\alpha(G)=\min\{|G|-n_+(W),|G|-n_-(W)\}$. 

\begin{lemma}\label{lem:4iscontradicting} Let $W$ be a weight matrix for a graph $G$ of order $n$.  If $W$ has a principal submatrix with $\alpha(G)+1$ positive eigenvalues and a principal submatrix with $\alpha(G)+1$ negative eigenvalues, then $W$ is not optimal.
\end{lemma}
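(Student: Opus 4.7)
The plan is to show that if $W$ has both types of principal submatrices described, then $n_+(W) \geq \alpha(G)+1$ and $n_-(W) \geq \alpha(G)+1$, and then to feed these bounds into the alternative form \eqref{eq:altinertiabound} of the inertia bound to rule out equality.

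First I would handle the positive side. Let $C$ be a $k \times k$ principal submatrix of $W$ with $\alpha(G)+1$ positive eigenvalues, and list the eigenvalues of $C$ as $\tau_1 \geq \cdots \geq \tau_k$ and of $W$ as $\lambda_1 \geq \cdots \geq \lambda_n$. By hypothesis $\tau_{\alpha(G)+1} > 0$, and Theorem~\ref{thm:alt interlacing} gives $\lambda_{\alpha(G)+1} \geq \tau_{\alpha(G)+1} > 0$. Hence $\lambda_1,\dots,\lambda_{\alpha(G)+1}$ are all positive, so $n_+(W) \geq \alpha(G)+1$.

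For the negative side I would apply Theorem~\ref{thm:alt interlacing} to $-W$ rather than to $W$: if $C'$ is a $k' \times k'$ principal submatrix of $W$ with $\alpha(G)+1$ negative eigenvalues, then $-C'$ is a principal submatrix of $-W$ with $\alpha(G)+1$ positive eigenvalues, and the same argument gives $n_+(-W) \geq \alpha(G)+1$, i.e.\ $n_-(W) \geq \alpha(G)+1$. Combining the two, $\min\{n_+(W),n_-(W)\} \geq \alpha(G)+1$, so from \eqref{eq:altinertiabound},
\begin{equation*}
n_0(W) + \min\{n_+(W),n_-(W)\} \;\geq\; \alpha(G)+1 \;>\; \alpha(G),
\end{equation*}
and equality in the inertia bound fails, so $W$ is not optimal.

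There is essentially no hard step here: the content of the lemma is just Cauchy interlacing applied twice, once to $W$ and once to $-W$, combined with the observation that the alternative form \eqref{eq:altinertiabound} makes both inertia counts appear symmetrically. The only subtlety worth flagging is that Theorem~\ref{thm:alt interlacing} as stated in the excerpt only gives one direction of interlacing ($\lambda_i \geq \tau_i$), so one has to remember to obtain the lower-end inequality by negating the matrix rather than invoking a two-sided version.
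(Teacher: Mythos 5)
Your proof is correct and follows essentially the same route as the paper: Cauchy interlacing forces $n_+(W)\geq\alpha(G)+1$ and $n_-(W)\geq\alpha(G)+1$, which pushes the inertia bound strictly above $\alpha(G)$. The only (immaterial) differences are that you conclude via the equivalent form \eqref{eq:altinertiabound} rather than \eqref{eq:inertiabound}, and that you spell out the negation trick for the negative eigenvalues, which the paper leaves implicit.
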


\begin{proof} By Theorem \ref{thm:alt interlacing}, $W$ has at least $\alpha(G)+1$ positive and $\alpha(G)+1$ negative eigenvalues.    So $n_-(W)\leq n-\alpha(G)-1$ and $n_+(W)\leq n-\alpha(G)-1$. Thus $\min\{n-n_+(W),n-n_-(W)\}\geq \alpha(G)+1>\alpha(G)$. Therefore Equation (\ref{eq:inertiabound}) is not tight and $W$ is not optimal.

\end{proof}

The independence number of Paley 17 is 3 (Observation \ref{obs:alphapaley17}).  We will use Lemma \ref{lem:4iscontradicting} to show that the inertia bound is not tight for Paley 17.  To do so, we need to show that every weight matrix for Paley 17 has principal submatrices which contain 4 positive eigenvalues and 4 negative eigenvalues.

\section{Paley 17}\label{sec:paley} 
We now pause for a moment to consider properties of Paley 17.  Paley graphs were first studied in connection with number theory and have been used in giving lower bounds for diagonal Ramsey numbers. The following definition is taken from \cite{godsilroyle}. Henceforth $q$ is a prime power such that $q\equiv 1\pmod 4$.  The \emph{Paley graph} $P(q)$ has as vertex set the elements of the finite field $GF(q)$, with two vertices adjacent if and only if their difference is a nonzero square in $GF(q)$.

Paley graphs are (among other things) strongly regular graphs, self-complementary, circulants, vertex-transitive, edge transitive, and arc-transitive (See \cite{bollabas} chapter XIII).

An \emph{independent set} or \emph{coclique} is a subset of vertices which are pairwise nonadjacent.
The size of a maximum independent set is the \emph{independence number} of $G$ and is denoted $\alpha(G)$.   The independence number for a large number of Paley graphs has been calculated by James B. Shearer\footnote{\emph{Independence number of Paley graphs--IBM Research}, Data published online at http://www.research.ibm.com/people/s/shearer/indpal.html}.
\begin{figure}[h]
\caption{Paley 17}
\includegraphics[scale=.5]{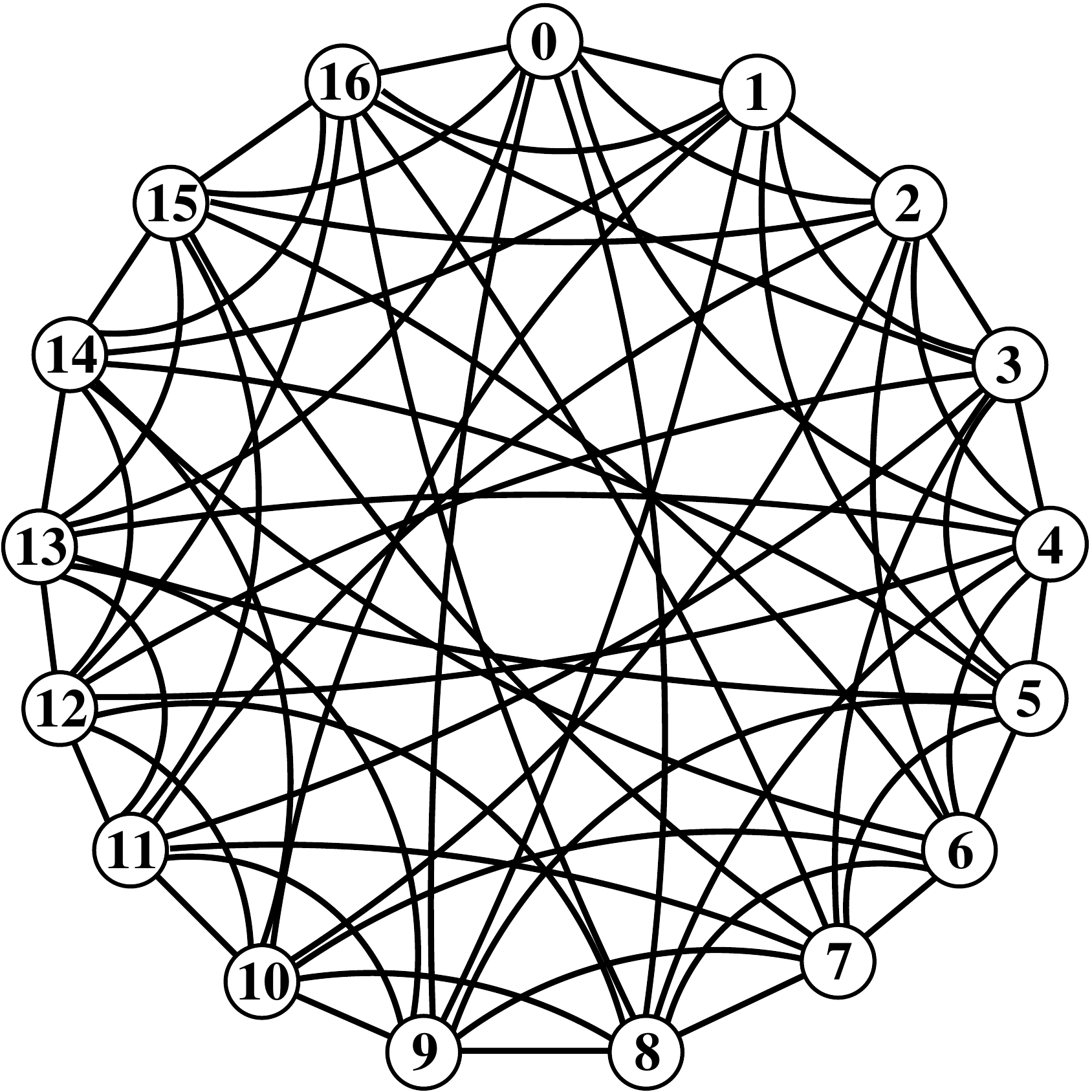}
\centering
\end{figure}
\begin{observation}\label{obs:alphapaley17} The independence number of $P(17)$ is 3.
\end{observation}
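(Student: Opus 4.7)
The plan is to verify $\alpha(P(17)) = 3$ by direct inspection, exploiting the explicit description of $P(17)$ in terms of $GF(17)$. Computing the nonzero squares modulo $17$ gives $Q = \{1,2,4,8,9,13,15,16\}$, so vertices $a,b$ are adjacent iff $a-b\in Q$; the non-squares, which are precisely the non-neighbors of $0$, form $N = \{3,5,6,7,10,11,12,14\}$. The lower bound $\alpha(P(17))\ge 3$ is then immediate from the triple $\{0,3,6\}$, whose pairwise differences $3$ and $6$ lie in $N$.

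The real work is establishing $\alpha(P(17))\le 3$. By vertex-transitivity, any hypothetical independent $4$-set can be translated to contain $0$, so it suffices to show that the induced subgraph $P(17)[N]$ has no independent $3$-set. Here I would exploit the fact that $P(17)$ is strongly regular with parameters $(17,8,3,4)$: each $v\in N$ is non-adjacent to $0$, so the parameter $\mu=4$ forces $v$ to have exactly $4$ common neighbors with $0$, i.e.\ $4$ neighbors among the squares, leaving $8-4=4$ neighbors inside $N$. Thus $P(17)[N]$ is $4$-regular on $8$ vertices, its complement $H$ is $3$-regular, and the task reduces to checking that $H$ is triangle-free.

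This last check is a short finite computation. The three non-neighbors of $3$ within $N$ are $\{6,10,14\}$, and the differences $10-6=4$, $14-6=8$, and $14-10=4$ all lie in $Q$, so these three vertices are mutually adjacent in $P(17)$ and no triangle of $H$ passes through $3$. Because multiplication by any element of $Q$ is an automorphism of $P(17)$ that fixes $0$, and this action is transitive on $N$ (the non-squares form a single orbit, being a coset of $Q$ in $GF(17)^*$), triangle-freeness at $3$ propagates to every vertex of $H$, finishing the proof. The only obstacle is routine bookkeeping; indeed this observation is already recorded in Shearer's tables cited earlier in the paper.
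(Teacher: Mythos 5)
Your proposal is correct, and it is worth noting that the paper offers no proof at all for this observation: it is stated bare, with the justification implicitly deferred to Shearer's computational tables cited just above it. Your argument supplies a clean, self-contained derivation. The structure is sound: translation by an automorphism $\sigma_{1,-v}$ reduces a hypothetical independent $4$-set to one containing $0$, whose other three elements must lie among the non-squares $N=\{3,5,6,7,10,11,12,14\}$; the strongly regular parameters $(17,8,3,4)$ correctly give that each $v\in N$ has exactly $\mu=4$ neighbours in $Q$ and hence $4$ in $N$, so the complement $H$ of $P(17)[N]$ is $3$-regular; and the single neighbourhood check at $3$ (the $H$-neighbours $\{6,10,14\}$ have pairwise differences $4,8,4\in Q$, so they span no $H$-edge) propagates to all of $N$ because multiplication by a square fixes $0$ and acts transitively on the coset $N$ of $Q$ in $GF(17)^{*}$. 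I verified the arithmetic: $Q=\{1,2,4,8,9,13,15,16\}$, the non-neighbours of $3$ in $N$ are indeed $\{6,10,14\}$, and $\{0,3,6\}$ is independent. Compared with the paper's reliance on an external computation, your route costs one explicit check plus the transitivity observation, and in exchange makes the paper's starting point (\,$\alpha(P(17))=3$, on which Lemma \ref{lem:4iscontradicting} and the whole contradiction in Theorem \ref{thm:nottight} rest\,) verifiable by hand; it also reuses machinery (the automorphisms $\sigma_{ab}$ of Lemma \ref{lem:automorphism}) that the paper already develops for other purposes.
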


A graph $G$ is \emph{$\alpha$-critical} if $\alpha(G)<\alpha(G-e)$ for all edges $e$.

\begin{observation}\label{obs:alphacritical} The graph $P(17)$ is $\alpha$-critical.
\end{observation}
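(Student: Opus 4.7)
The plan is to exploit the edge-transitivity (in fact arc-transitivity) of $P(17)$ noted in Section~\ref{sec:paley} to reduce the $\alpha$-criticality verification to a single edge. More precisely, since the automorphism group of $P(17)$ acts transitively on edges, $\alpha(P(17)-e)$ takes the same value for every edge $e$, so it suffices to exhibit one edge $e_0$ with $\alpha(P(17)-e_0)\geq 4$.

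A convenient choice is the edge $e_0=\{0,1\}$, which is indeed an edge because $1=1^2$ is a nonzero square in $GF(17)$. Since $\alpha(P(17))=3$ by Observation~\ref{obs:alphapaley17}, any independent set of size $4$ in $P(17)-e_0$ must contain both endpoints of $e_0$; otherwise it would already be independent in $P(17)$. So the task reduces to finding two further vertices $u,v\in GF(17)$ such that $u,v,u-1,v-1$ are all nonsquares in $GF(17)$ and $v-u$ is also a nonsquare.

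Concretely, one computes that the set of nonsquares modulo $17$ is $\{3,5,6,7,10,11,12,14\}$. Intersecting this with the set of $u$ for which $u-1$ is also a nonsquare yields the candidate pool $\{6,7,11,12\}$. A quick pairwise check of differences inside this pool produces independent pairs, for instance $\{6,11\}$ (since $11-6=5$ is a nonsquare). Thus $\{0,1,6,11\}$ is an independent set in $P(17)-e_0$, witnessing $\alpha(P(17)-e_0)\geq 4$, and by edge-transitivity $\alpha(P(17)-e)\geq 4$ for every edge $e$.

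There is no real obstacle in this argument; the only points to be careful about are (i)~justifying the edge-transitivity reduction cleanly, and (ii)~verifying that $1$ is indeed a nonzero square so that the chosen representative edge actually lies in $P(17)$, after which the verification of the independent set is a routine finite check.
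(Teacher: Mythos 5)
Your proof is correct and follows essentially the same route as the paper: reduce to the single edge $(0,1)$ by arc-transitivity and then exhibit an explicit independent set of size $4$ in $P(17)$ minus that edge. The only difference is the choice of witness set ($\{0,1,6,11\}$ versus the paper's $\{0,1,7,12\}$), both of which check out.
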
  

\begin{proof} Since $P(17)$ is arc-transitive it is sufficient to show that deleting edge $(0,1)$ increases $\alpha$.  The set $\{0,1,7,12\}$ is independent in $P(17)$ delete $(0,1)$.
\end{proof}

An \emph{automorphism} of a graph $G$ is a permutation $\sigma$ of $V(G)$ such that $v_1\sim v_2$ if and only if $\sigma(v_1)\sim \sigma(v_2)$.

\begin{lemma}\label{lem:automorphism} The function $\sigma_{ab}:V\rightarrow V$, $\sigma_{ab}(v)=av+b$ 
where $a$ is a nonzero square in $GF(q)$ and $b\in GF(q)$ is an automorphism of $P(q)$.
\end{lemma}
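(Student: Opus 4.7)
The plan is to verify both properties required of an automorphism: that $\sigma_{ab}$ is a bijection of $V(P(q)) = GF(q)$, and that it preserves the adjacency relation in both directions.

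First, I would observe that $\sigma_{ab}$ is a bijection of $GF(q)$. Since $a \neq 0$, it has an explicit two-sided inverse given by $\sigma_{ab}^{-1}(w) = a^{-1}(w-b)$, so it is a permutation of $V$.

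For the adjacency condition, the key is the identity
\[
\sigma_{ab}(v_1) - \sigma_{ab}(v_2) = (av_1 + b) - (av_2 + b) = a(v_1 - v_2).
\]
By the definition of $P(q)$, $v_1 \sim v_2$ if and only if $v_1 - v_2$ is a nonzero square in $GF(q)$, and similarly $\sigma_{ab}(v_1) \sim \sigma_{ab}(v_2)$ if and only if $a(v_1 - v_2)$ is a nonzero square. So the entire claim reduces to showing that, for fixed nonzero square $a$, the map $x \mapsto ax$ permutes the nonzero squares of $GF(q)$ (and equivalently permutes the nonsquares among themselves).

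This last multiplicative fact is essentially the statement that the nonzero squares form a subgroup of the multiplicative group $GF(q)^{\times}$: the product of two nonzero squares is a nonzero square, and if $ax$ is a nonzero square while $a$ is a nonzero square, then $x = a^{-1}(ax)$ is also a nonzero square since $a^{-1}$ is a square. Thus $v_1 - v_2$ is a nonzero square if and only if $a(v_1 - v_2)$ is, proving that $v_1 \sim v_2 \iff \sigma_{ab}(v_1) \sim \sigma_{ab}(v_2)$. There is no real obstacle here; the only subtle point worth stating carefully is the biconditional on squares, which is exactly why the hypothesis requires $a$ to be a \emph{square} rather than merely nonzero.
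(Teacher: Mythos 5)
Your proof is correct and follows essentially the same route as the paper: verify bijectivity, reduce adjacency preservation to the identity $\sigma_{ab}(v_1)-\sigma_{ab}(v_2)=a(v_1-v_2)$, and use that multiplication by a nonzero square preserves the set of nonzero squares. You actually justify that last multiplicative step (via the subgroup structure of the squares) more explicitly than the paper does.
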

\begin{proof} Note that $\sigma_{ab}$ is one-to-one and onto.  Given two vertices $v_1,v_2$ of $P(q)$, a nonzero square $a$ in $GF(q)$, and their images $\sigma_{ab}(v_1),\sigma_{ab}(v_2)$.  Now $v_1\sim v_2$ if and only if $v_1-v_2$ is a nonzero square.  Since $a$ is a nonzero square, $v_1-v_2$ is a nonzero square if and only if $a(v_1-v_2)$ is a nonzero square.  Further, $a(v_1-v_2)$ is a nonzero square if and only if $av_1-b-(av_2-b)$ is a nonzero square if and only if $\sigma_{ab}(v_1)\sim \sigma_{ab}(v_1)$.

\end{proof}

The quadratic residues (or nonzero squares) modulo 17 are $\pm1,\pm 2,\pm 4, \pm 8$.  A $k$-\emph{edge} of $P(17)$ is an edge $v_1v_2$ such that $v_1-v_2=\pm k\pmod {17}$ for $k\in \{1,2,4,8\}$.   For a given $k$, the set of $k$ edges form a 17 cycle.  The 1-edges form the cycle $0$, $1$, $2$, $3,\ldots,16$, $0$, the 2-edges form the cycle $0,2,4,\ldots,$ $14,16,1,3,\ldots,$ $13,15,0$, the 4-edges form the cycle $0$, $4$, $8$, $12$, $16$, $3$, $7$, $11$, $15$, $2$, $6$, $10$, $14$, $1$, $5$, $9$, $13$, $0$, and the 8-edges form the cycle $0$, $8$, $16$, $7$, $15$, $6$, $14$, $5$, $13$, $4$, $12$, $3$, $11$, $2$, $10$, $1$, $9$, $0$.  Thus we have the following observation.

\begin{observation}\label{obs:2factorization} $P(17)$ has a 2-factorization consisting of four cycles of length 17.
\end{observation}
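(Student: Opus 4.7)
The plan is to make the edge classification in the preceding paragraph into a clean argument. Since the nonzero elements of $GF(17)$ split as $\pm 1, \pm 2, \pm 4, \pm 8$ (squares) together with $\pm 3, \pm 5, \pm 6, \pm 7$ (nonsquares), and the squares partition into four unordered pairs $\{k,-k\}$ for $k \in \{1,2,4,8\}$, every edge $v_1v_2$ of $P(17)$ satisfies $v_1 - v_2 \equiv \pm k \pmod{17}$ for a unique such $k$. I would define $E_k$ to be the set of $k$-edges and observe that $\{E_1, E_2, E_4, E_8\}$ is by construction a partition of $E(P(17))$.

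Next, for each fixed $k \in \{1,2,4,8\}$, I would verify that $E_k$ forms a single $17$-cycle. At any vertex $v$, the only edges of $E_k$ incident to $v$ are $v(v+k)$ and $v(v-k)$, so the subgraph $(V, E_k)$ is $2$-regular. Since $\gcd(k,17)=1$, the sequence $0, k, 2k, \ldots, 16k$ taken mod $17$ hits every residue exactly once before returning to $0$, so $E_k$ is connected and hence a single Hamilton cycle on $V$ rather than a disjoint union of shorter cycles. (The explicit cycles listed just before the observation are precisely these traversals.)

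Combining the two steps, $E(P(17))$ is the edge-disjoint union of four $17$-cycles, which is the claimed $2$-factorization. As a consistency check, this accounts for $4\cdot 17 = 68$ edges, matching the edge count of the $8$-regular graph $P(17)$ on $17$ vertices. I do not anticipate a genuine obstacle: the only point requiring care is the connectedness of each $E_k$, which is immediate from $\gcd(k,17)=1$.
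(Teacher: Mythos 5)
Your proof is correct and takes essentially the same approach as the paper: partitioning the edges into the classes $E_k$ for $k\in\{1,2,4,8\}$ according to the difference $\pm k$ of the endpoints. The only difference is cosmetic --- the paper verifies that each class is a single $17$-cycle by writing out the four cycles explicitly, whereas you deduce it from $2$-regularity together with $\gcd(k,17)=1$, which is a slightly cleaner way to rule out shorter cycles.
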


An $a$-$b$-$c$ triangle is a triangle in $P(17)$ which consists of an $a$-edge, a $b$-edge, and a $c$-edge.  For example, $\Delta(0,1,2)$ is a 1-1-2 triangle, while $\Delta(0,8,9)$ is an 8-8-1 triangle.

\begin{lemma}\label{lem:autoacttranstriangles}There exists an automorphism of $P(17)$ which maps $\Delta(0,1,2)$ to any other triangle. In other words, the group of automorphisms of $P(17)$ acts transitively on its triangles.\end{lemma}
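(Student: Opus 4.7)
The plan is organized around the observation that every triangle of $P(17)$, in some ordering of its vertices, is a $3$-term arithmetic progression $(b,\,b+d,\,b+2d)$ in $GF(17)$ whose common difference $d$ is a nonzero square. Once this is in hand, the automorphism $\sigma_{d,b}$ supplied by Lemma~\ref{lem:automorphism} sends $(0,1,2)$ to $(b,\,b+d,\,b+2d)$ and therefore maps $\Delta(0,1,2)$ onto the given triangle, proving transitivity.

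My first step is to reduce to triangles through the edge $\{0,1\}$. Given an arbitrary triangle $T$ and any edge $\{x,y\}\subseteq T$, the difference $d:=y-x$ is a nonzero square of $GF(17)$; since the nonzero squares form a multiplicative group, the map $\sigma_{1/d,\,-x/d}$ is an automorphism (again by Lemma~\ref{lem:automorphism}) and sends $\{x,y\}$ to $\{0,1\}$. Each $\sigma_{ab}$ preserves the property ``arithmetic progression with nonzero square common difference'' (the new common difference is $a$ times the old, a product of squares), so verifying the AP property for the image triangle $T'$ through $\{0,1\}$ will suffice.

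My second step is to enumerate the triangles through $\{0,1\}$. These correspond to the common neighbors of $0$ and $1$, i.e., those $w\in GF(17)$ with both $w$ and $w-1$ nonzero squares. Running through the eight nonzero squares $\{1,2,4,8,9,13,15,16\}$ modulo $17$ yields $w\in\{2,9,16\}$, so there are exactly three triangles to check. Each admits an AP ordering with nonzero square common difference: $(0,1,2)$ with $d=1$; $(0,9,1)$ with $d=9$, using $9+9\equiv 1\pmod{17}$; and $(16,0,1)$ with $d=1$, using $16+2\equiv 1\pmod{17}$.

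The main conceptual step is spotting the AP reformulation, which converts a count of $68$ triangles into three easy cases. After that the remaining work is a brief case check, so I do not anticipate any serious obstacle.
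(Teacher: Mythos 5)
Your proof is correct, and while it relies on the same family of affine automorphisms $\sigma_{ab}$ from Lemma~\ref{lem:automorphism}, it establishes transitivity by a genuinely different route. The paper classifies all triangles by their edge types into four families (1-1-2, 2-2-4, 4-4-8, 8-8-1), uses the characteristic polynomial to certify that the resulting count of $4\times 17=68$ is exhaustive, and then observes that the maps $\sigma_{ab}$ with $a\in\{1,2,4,8\}$ carry $\Delta(0,1,2)$ onto each family. You instead use the group structure to reduce to triangles containing the fixed edge $\{0,1\}$ (mapping an arbitrary edge $\{x,y\}$ there via $\sigma_{1/d,\,-x/d}$, which is legitimate since the nonzero squares form a multiplicative group), enumerate the three common neighbors $w\in\{2,9,16\}$ of $0$ and $1$, and check the arithmetic-progression form in each case; I verified the orderings $(0,1,2)$, $(0,9,1)$, $(16,0,1)$ and the invariance of the AP-with-square-difference property under $\sigma_{ab}$ and its inverse, and all of it holds. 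What your approach buys is independence from the global triangle count: you never need to know there are exactly $68$ triangles or that the four type-families are disjoint and exhaustive, at the cost of the extra reduction step. What the paper's approach buys is the explicit type classification, which it reuses later (the $k$-edge bookkeeping in Lemmas~\ref{lem:allpos} and \ref{lem:oneneg} is organized around exactly these four triangle types), so the classification is not wasted work there.
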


\begin{proof} The graph $P(17)$ has 68 triangles as seen from the characteristic polynomial (coefficient of $t^{14}$ is $-136$). There are 17 1-1-2 triangles, 17 2-2-4 triangles, 17 4-4-8 triangles, and 17 8-8-1 triangles.  The automorphisms $\sigma_{1b}$ for $b\in\{0,1,\ldots, 16\}$ map $\Delta(0,1,2)$ to any 1-1-2 triangle.  Similarly $\sigma_{ab}$ for $a\in\{2,4,8\}$, $b\in\{0,1,\ldots, 16\}$ maps $\Delta(0,1,2,)$ to any 2-2-4, 4-4-8, or 8-8-1 triangle.

\end{proof}

Lemma \ref{lem:autoacttranstriangles} will be used to show that there are many isomorphic copies of the induced subgraphs of Section \ref{sec:indsub}.

\section{Induced Subgraphs of $P(17)$}\label{sec:indsub}

In this section we consider two induced subgraphs $G_1$ and $G_2$ of $P(17)$.  When the entries of the principal submatrices corresponding to the edges of $G_1$ and $G_2$ are nonzero, their determinants are nonzero.  This fact and the inertia bound are used to show that such principal submatrices have either 3 positive and 4 negative eigenvalues, or 4 positive and 3 negative eigenvalues.

Let $G_1$ be the following graph on 7 vertices and $W_1$ a weight matrix for $G_1$.

\begin{multicols}{2}
\begin{center}
\includegraphics[scale=.5]{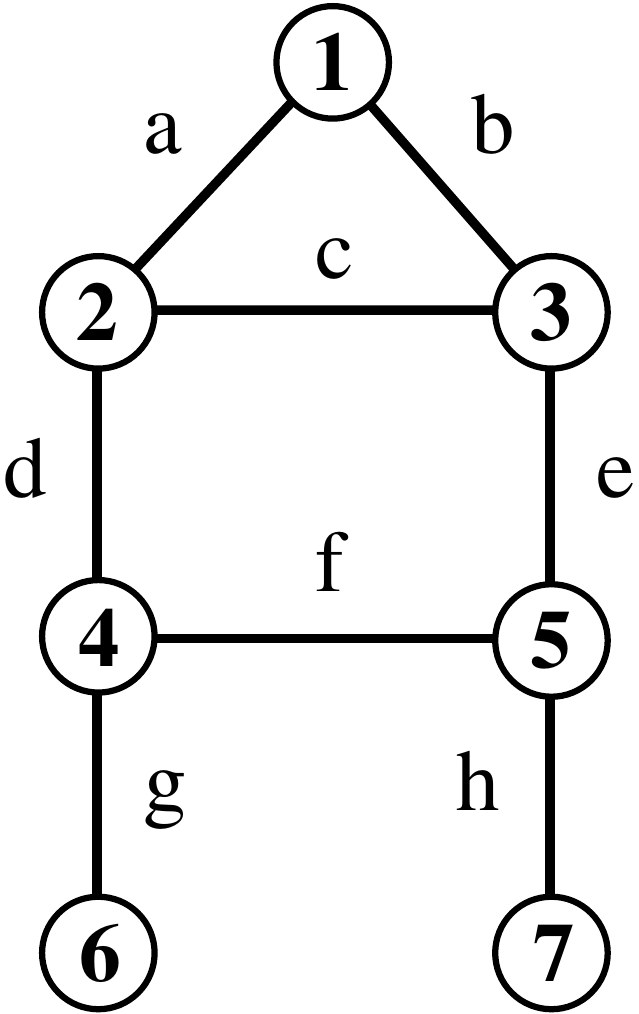}
\end{center}

\begin{center}
$W_1=\left[\begin{array}{ccccccc}   
0 & a & b & 0 & 0 & 0 & 0\\   
a & 0 & c & d & 0 & 0 & 0\\  
b & c & 0 & 0 & e & 0 & 0\\ 
0 & d & 0 & 0 & f & g & 0\\ 
0& 0 & e & f & 0 & 0 & h\\ 
0 & 0 & 0 & g & 0 & 0 & 0\\ 
0 & 0 & 0 & 0 & h & 0 & 0\\ 
\end{array}\right]$
\end{center}

\end{multicols}

\begin{observation}\label{obs:nonzerodet} $\det(W_1)=2abcg^2h^2$
\end{observation}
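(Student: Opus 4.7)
The plan is to use the Leibniz formula $\det(W_1)=\sum_{\sigma\in S_7}\sign(\sigma)\prod_{i=1}^{7}w_{i\sigma(i)}$ together with the sparsity of $W_1$. Since every diagonal entry vanishes, a permutation $\sigma$ contributes a nonzero term only if $(i,\sigma(i))$ is an edge of $G_1$ for every $i$ (in particular $\sigma$ is fixed-point-free). Thus the contributing permutations are exactly the perfect matchings in the ``double cover'' of $G_1$, i.e.\ cycle decompositions of $\sigma$ consisting of edges and 2-edges of $G_1$.

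The first step is to observe that vertices 6 and 7 are pendant vertices whose unique neighbors are 4 and 5, respectively. Hence any contributing $\sigma$ must satisfy $\sigma(6)=4$ and $\sigma(7)=5$, and by bijectivity $\sigma(4)=6$ and $\sigma(5)=7$. This pins down two disjoint transpositions $(4\ 6)$ and $(5\ 7)$ which together contribute a factor of $w_{46}w_{64}\cdot w_{57}w_{75}=g^2h^2$ to the product and a factor of $(-1)^{2}=+1$ to the sign.

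The second step is to analyze the restriction of $\sigma$ to $\{1,2,3\}$. This restriction must be a fixed-point-free bijection of $\{1,2,3\}$ using only edges of the induced triangle on $\{1,2,3\}$; a quick case check shows the only candidates are the two $3$-cycles $(1\ 2\ 3)$ and $(1\ 3\ 2)$. Each 3-cycle is even, and each yields the monomial $abc$ (since $w_{12}w_{23}w_{31}=w_{13}w_{32}w_{21}=abc$).

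Combining the two steps, every contributing $\sigma$ has sign $+1$ and contributes $abc\cdot g^2h^2$; summing over the two choices of 3-cycle gives $\det(W_1)=2abcg^2h^2$. The only thing that requires care is the small case analysis confirming that no other permutation on $\{1,2,3\}$ respects the triangle structure, but this is immediate. An equivalent and equally short route would be iterated cofactor expansion along rows $6$, $7$, then columns $4$, $5$, reducing the computation to the determinant of the weighted triangle $\bigl(\begin{smallmatrix}0&a&b\\a&0&c\\b&c&0\end{smallmatrix}\bigr)=2abc$ multiplied by $g^2h^2$.
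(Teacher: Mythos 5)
Your computation is correct: the pendant vertices $6$ and $7$ force $\sigma(6)=4$, $\sigma(7)=5$ and (since only $4$ and $5$ can map onto $6$ and $7$) also $\sigma(4)=6$, $\sigma(5)=7$, leaving only the two even $3$-cycles on $\{1,2,3\}$, each contributing $+abcg^2h^2$; this also explains why $d$, $e$, $f$ are absent from the answer. The paper states this determinant as an unproved Observation, so there is no argument to compare against, but your Leibniz-formula derivation (or the equivalent iterated cofactor expansion you mention) is exactly the routine verification the author leaves to the reader.
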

\begin{observation}\label{ons:indsub1} The graph $G_1$ is an induced subgraph of Paley 17.
\end{observation}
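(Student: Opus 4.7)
The statement is purely existential, so the plan is to exhibit an explicit seven-element subset $S \subseteq V(P(17))$ that induces a graph isomorphic to $G_1$, and then verify the $\binom{7}{2}=21$ pairwise (non-)adjacencies by checking whether each difference modulo $17$ lies in the set of quadratic residues $Q=\{\pm 1,\pm 2,\pm 4,\pm 8\}$.

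To structure the search, I first observe that $G_1$ contains a unique triangle, namely $\{1,2,3\}$: a quick inspection of the edge list $\{(1,2),(1,3),(2,3),(2,4),(3,5),(4,5),(4,6),(5,7)\}$ read off from $W_1$ shows that no other triple spans three edges. By Lemma \ref{lem:autoacttranstriangles}, the automorphism group of $P(17)$ acts transitively on triangles, so I may assume this triangle is mapped to $\Delta(0,1,2)\subset P(17)$. Within $G_1$, vertex $1$ has degree $2$ while vertices $2,3$ have degree $3$, so up to a further automorphism of $P(17)$ stabilizing the triangle I may take $1\mapsto 2$, $2\mapsto 0$, $3\mapsto 1$.

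Reading off the remaining edges and non-edges of $G_1$, the images $v_4,v_5,v_6,v_7\in V(P(17))$ must satisfy: $v_4$ is a neighbor of $0$ but not of $1$ or $2$; $v_5$ is a neighbor of $1$ but not of $0$ or $2$; $v_4\sim v_5$; and $v_6,v_7$ are pendants adjacent (within $S$) only to $v_4$ and $v_5$, respectively. Each of the first three conditions restricts the choices for $v_4$ and $v_5$ to a small set via intersections of $Q$-translates of $\{0,1,2\}$, and then the pendants are constrained by five simultaneous non-adjacency conditions each. A short case check produces the working assignment $v_4=13$, $v_5=5$, $v_6=12$, $v_7=7$, giving $S=\{0,1,2,5,7,12,13\}$; the promised twenty-one modular subtractions against $Q$ then confirm that $S$ induces $G_1$.

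The only real obstacle is combinatorial bookkeeping rather than mathematical depth: the pendants' simultaneous non-adjacency conditions are restrictive, so a naive first pair $(v_4,v_5)$ may admit no valid pendants and one must be prepared to try a few candidates before landing on one that works. Invoking Lemma \ref{lem:autoacttranstriangles} at the outset is what keeps the case analysis to a handful of tries rather than an exhaustive scan over all $\binom{17}{7}$ subsets.
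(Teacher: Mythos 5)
Your proposal is correct and takes essentially the same approach as the paper, which justifies this observation simply by exhibiting an explicit embedding in a figure: your set $S=\{0,1,2,5,7,12,13\}$ does induce a copy of $G_1$ (the induced edges are $(0,1)$, $(0,2)$, $(1,2)$, $(0,13)$, $(1,5)$, $(5,13)$, $(12,13)$, $(5,7)$, matching $G_1$ under $1\mapsto 2$, $2\mapsto 0$, $3\mapsto 1$, $4\mapsto 13$, $5\mapsto 5$, $6\mapsto 12$, $7\mapsto 7$). The triangle-transitivity reduction is only a search heuristic and carries no logical weight, but that is harmless for a purely existential statement.
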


\begin{center}

\includegraphics[scale=.5]{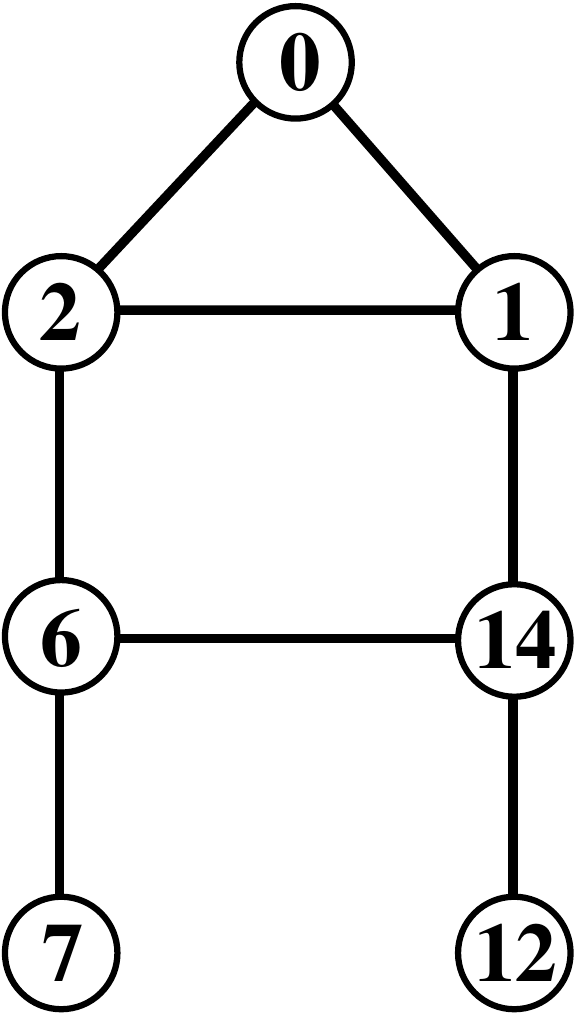}
\hspace{2cm}
\includegraphics[scale=.3]{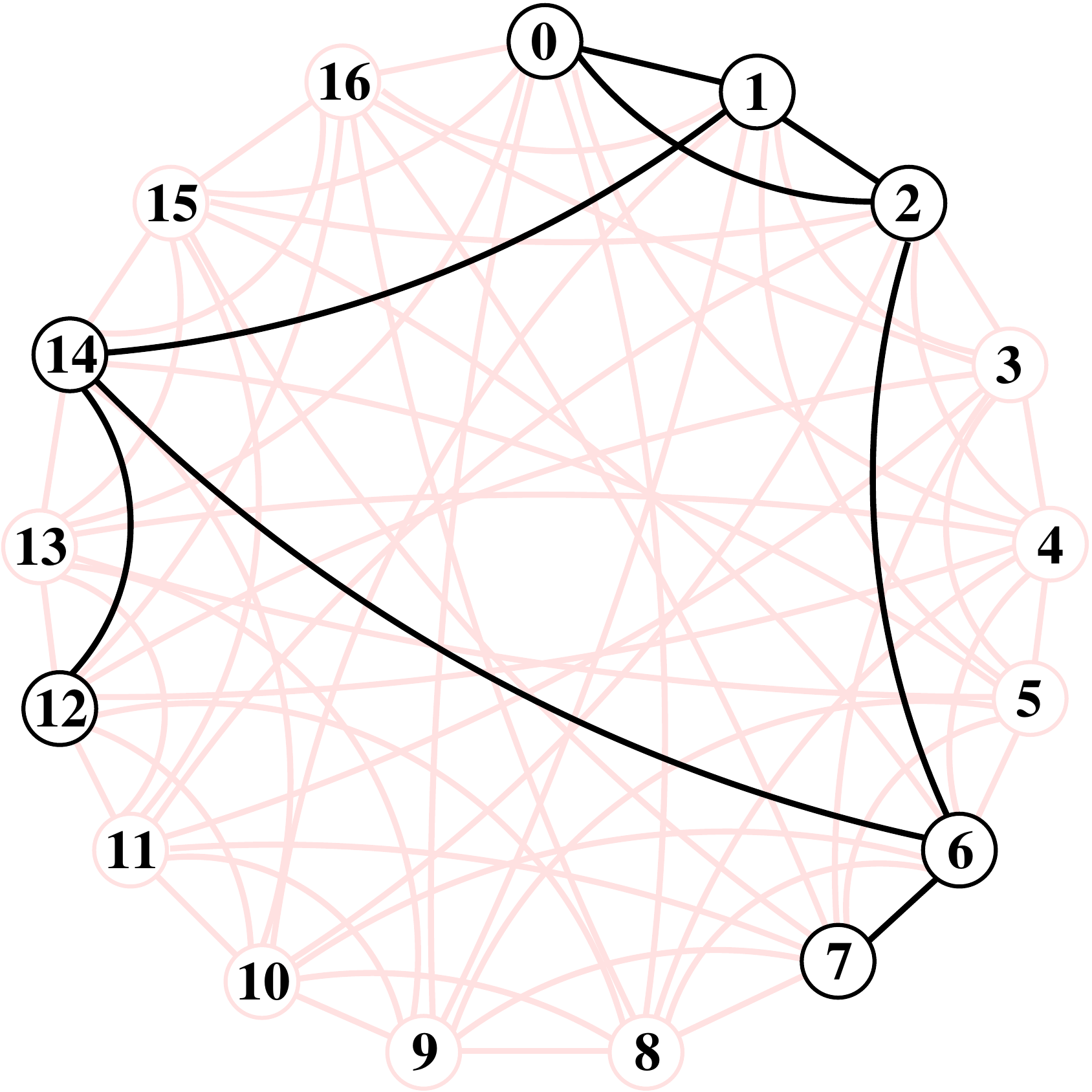}
\hfill
\end{center}
\begin{lemma}\label{lem:posnegevals} Let the product $abcgh\neq 0$.  
\begin{itemize} 
\item If $abc>0$ then $n_+(W_1)=3$ and $n_-(W_1)=4$.
\item If $abc<0$ then $n_+(W_1)=4$ and $n_-(W_1)=3$. 
\end{itemize}
\end{lemma}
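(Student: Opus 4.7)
The plan is to combine invertibility of $W_1$, the sign of $\det(W_1)$, and the inertia bound to pin down the inertia in all cases. Since $abcgh\neq 0$, Observation \ref{obs:nonzerodet} gives $\det(W_1)=2abcg^2h^2\neq 0$, so $W_1$ has no zero eigenvalue and $n_+(W_1)+n_-(W_1)=7$. Because $\det(W_1)=\prod_i \lambda_i$, its sign equals $(-1)^{n_-(W_1)}$; and since $2g^2h^2>0$, this sign also equals $\sign(abc)$. Hence $n_-(W_1)$ is even when $abc>0$ and odd when $abc<0$.

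Next, I would exhibit an independent set of size three in $G_1$. Reading the adjacencies off the nonzero pattern of $W_1$, vertex $1$ is adjacent only to $2,3$; vertex $4$ only to $2,5,6$; and vertex $7$ only to $5$; so $\{1,4,7\}$ is independent and $\alpha(G_1)\geq 3$. Applying the inertia bound (Equation (\ref{eq:inertiabound})) to the weight matrix $W_1$ of $G_1$ yields $n_+(W_1)\leq 7-3=4$ and $n_-(W_1)\leq 4$.

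Combining $n_+(W_1)+n_-(W_1)=7$ with $n_+(W_1),n_-(W_1)\leq 4$ forces the multiset $\{n_+(W_1),n_-(W_1)\}$ to be exactly $\{3,4\}$. The parity constraint from the sign of the determinant then resolves which value goes where: $n_-(W_1)=4$ and $n_+(W_1)=3$ in the case $abc>0$, while $n_-(W_1)=3$ and $n_+(W_1)=4$ in the case $abc<0$. This is exactly the statement of the lemma.

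There is no genuinely hard step here: the argument is forced once one observes that the determinant alone controls parity and that $\alpha(G_1)\geq 3$ alone controls the upper bound on each of $n_\pm$. The only bookkeeping required is verifying the size-three independent set in $G_1$, which is a short finite check, and noting that the squares $g^2$ and $h^2$ do not disturb the sign of $\det(W_1)$.
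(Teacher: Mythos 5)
Your proposal is correct and follows essentially the same route as the paper: invertibility from the determinant formula, the inertia bound with $\alpha(G_1)=3$ forcing $n_\pm\leq 4$, and the sign of $\det(W_1)$ resolving which of $n_+,n_-$ equals $4$. The only difference is that you explicitly exhibit the independent set $\{1,4,7\}$, which the paper leaves as an unstated check.
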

\begin{proof}  Apply Equation (\ref{eq:inertiabound}) to $G_1$ and $W_1$.  Since $\alpha(G_1)=3$, we have that $$3\leq \min\{7-n_+(W_1),7-n_-(W_1)\}.$$ Thus $n_-(W_1)\leq 4$ and $n_+(W_1)\leq 4$.  Since $abcgh\neq 0$, $\det(W_1)\neq 0$. Thus either $n_+(W_1)=4$ and $n_-(W_1)=3$ or $n_+(W_1)=3$ and $n_-(W_1)=4$.  Further, if $abc>0$, then $\det(W_1)>0$ so the product of the eigenvalues is positive and $n_-(W_1)=4$.  On the other hand, if $abc<0$, then $\det(W_1)<0$ and $n_-(W_1)=3$.

\end{proof}

Note that Lemma \ref{lem:posnegevals} establishes the relationship between the sign of a triangle in $P(17)$ and whether a principal submatrix has 4 positive or 4 negative eigenvalues.  In Section \ref{sec:weight matrices} we will use Lemma \ref{lem:autoacttranstriangles} to show that every triangle is part of an isomorphic copy of $G_1$.  We then deduce that in an optimal weight matrix for $P(17)$ all triangles have the same sign.

Let $G_2$ be the following graph on 7 vertices and $W_2$ a weight matrix for $G_2$.

\begin{multicols}{2}
\begin{center}
\includegraphics[scale=.4]{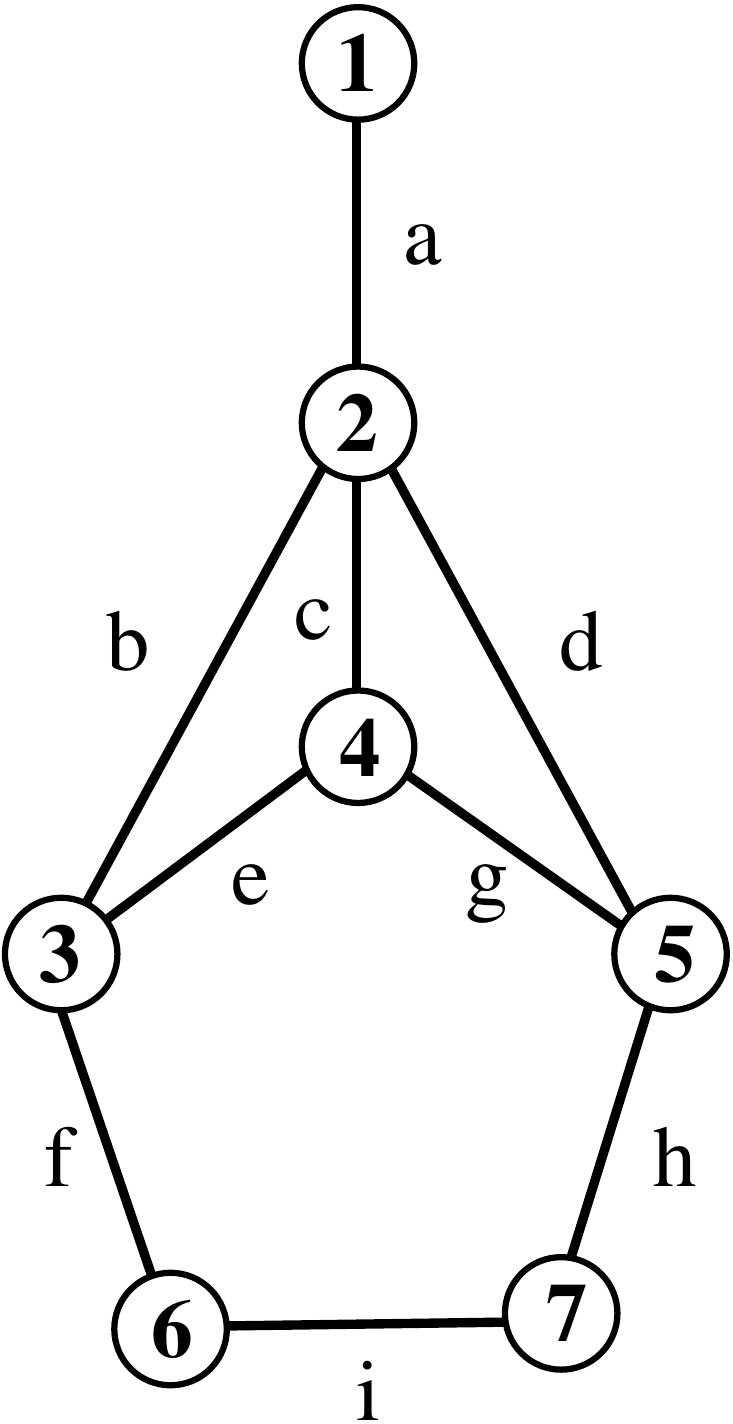}
\end{center}

\begin{center}
$W_2=\left[\begin{array}{ccccccc}   
0 & a & 0 & 0 & 0 & 0 & 0\\   
a & 0 & b & c & d & 0 & 0\\  
0 & b & 0 & e & 0 & f & 0\\ 
0 & c & e & 0 & g & 0 & 0\\ 
0 & d & 0 & g & 0 & 0 & h\\ 
0 & 0 & f & 0 & 0 & 0 & i\\ 
0 & 0 & 0 & 0 & h & i & 0\\ 
\end{array}\right]$
\end{center}

\end{multicols}

\begin{observation}\label{obs:nonzerodet2} $\det(W_2)=-2a^2efghi$
\end{observation}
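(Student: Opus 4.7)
My plan is to exploit the sparsity of $W_2$—in particular, the fact that vertex $1$ is a pendant of $G_2$, attached only to vertex $2$—and reduce the $7 \times 7$ determinant to a $5 \times 5$ one by two successive cofactor expansions.

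First I will expand $\det(W_2)$ along row $1$. Since the only nonzero entry in that row is $a$, in column $2$, this immediately gives $\det(W_2) = -a\cdot \det(M)$, where $M$ is the $6 \times 6$ minor obtained by deleting row $1$ and column $2$. The first column of $M$ is column $1$ of $W_2$ with its top entry removed, and so it too contains exactly one nonzero entry, namely the $a$ coming from the $(2,1)$ position of $W_2$. A second cofactor expansion along that column therefore yields $\det(M) = a\cdot \det(N)$, where $N$ is the $5 \times 5$ principal submatrix of $W_2$ indexed by the remaining vertices $\{3,4,5,6,7\}$.

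The heart of the calculation is identifying $N$. Reading off the nonzero entries of $W_2$ among those rows and columns, the induced subgraph of $G_2$ on $\{3,4,5,6,7\}$ has edges $34, 45, 57, 76, 63$ with weights $e, g, h, i, f$; this is a $5$-cycle. It is a standard fact—and a short direct expansion (for instance, along the row of $N$ corresponding to vertex $6$, which has only the two nonzero entries $f$ and $i$) confirms it—that the weighted adjacency matrix of an odd cycle has determinant equal to twice the product of its edge weights. Hence $\det(N) = 2efghi$. Stringing the three identities together gives
\[
\det(W_2) \;=\; (-a)\cdot a \cdot 2efghi \;=\; -2a^2 efghi,
\]
as claimed. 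I do not anticipate any real obstacle: the whole argument is routine cofactor manipulation driven by the pendant vertex, and the only place where care is required is in bookkeeping the signs produced by the two expansions.
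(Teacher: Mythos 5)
Your computation is correct: the two cofactor expansions triggered by the pendant vertex give $\det(W_2)=-a^2\det(N)$ with $N$ the principal submatrix on $\{3,4,5,6,7\}$, that submatrix is the weighted adjacency matrix of the $5$-cycle $3$-$4$-$5$-$7$-$6$-$3$, and the odd-cycle determinant identity (or your direct expansion along the row of vertex $6$) gives $\det(N)=2efghi$. The paper states this observation without proof, so there is nothing to contrast with; your argument is the natural verification and the signs all check out.
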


\begin{observation}\label{obs:indsub2} The graph $G_2$ is an induced subgraph of Paley 17.
\end{observation}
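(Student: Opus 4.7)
The statement is that $G_2$ is an induced subgraph of $P(17)$, and the natural way to prove such a claim is to exhibit an explicit $7$-vertex subset $S \subset GF(17)$ whose induced subgraph is isomorphic to $G_2$. Since no counting or uniqueness is asserted, a single exhibit-and-check will suffice; the paper's analogue for $G_1$ (Observation \ref{ons:indsub1}) is presumably handled the same way.

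To find $S$, I would begin by reading off the combinatorial structure of $G_2$ from the zero-pattern of $W_2$. Vertex $2$ has degree $4$; its neighbors $\{1,3,4,5\}$ induce a $P_3 \cup K_1$, with path $3$-$4$-$5$ (so $4$ is the middle) and isolated pendant $1$; vertices $6, 7$ have degree $2$ and are attached to the path endpoints $3$ and $5$ respectively; and $6 \sim 7$. By vertex-transitivity of $P(17)$ (Lemma \ref{lem:automorphism}), I may take the image of vertex $2$ to be $0$, whose neighborhood is $N(0) = \{\pm 1, \pm 2, \pm 4, \pm 8\}$. The first task is to find a $4$-subset of $N(0)$ that induces $P_3 \cup K_1$. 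Because $P(17)$ is strongly regular with parameters $(17,8,3,4)$, the subgraph on $N(0)$ is $3$-regular on $8$ vertices, so the search is short.

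Having chosen such a configuration inside $N(0)$ — for instance, the path $4$-$2$-$15$ together with the isolated vertex $9$ — I would next locate the images of vertices $6$ and $7$ among the non-neighbors of $0$. Each must be adjacent to the image of exactly one endpoint of the path and non-adjacent to every other previously chosen vertex, with one additional constraint $6 \sim 7$. This reduces to intersecting a handful of neighborhoods in $P(17)$ and sifting against the quadratic residue set; in the candidate above, one checks that $12$ is adjacent to $4$ (and to none of $0, 2, 9, 15$) and that $14$ is adjacent to $15$ and to $12$ (and to none of $0, 2, 4, 9$). Thus $S = \{0, 2, 4, 9, 12, 14, 15\}$ should work, under the correspondence $2 \mapsto 0,\ 4 \mapsto 2,\ 3 \mapsto 4,\ 1 \mapsto 9,\ 5 \mapsto 15,\ 6 \mapsto 12,\ 7 \mapsto 14$.

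The main obstacle is not depth but bookkeeping: a poorly chosen $P_3 \cup K_1$ inside $N(0)$ may leave no room for vertices $6$ and $7$, so one or two candidate configurations may need to be discarded before finding a workable one. Once $S$ is in hand, verifying the nine required edges and twelve required non-edges of the induced subgraph is purely mechanical, since each reduces to checking whether a single element of $GF(17)$ belongs to $\{\pm 1, \pm 2, \pm 4, \pm 8\}$.
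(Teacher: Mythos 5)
Your proposal is correct and matches the paper's approach: the observation is established simply by exhibiting an explicit induced copy of $G_2$ in $P(17)$ (the paper does this via a figure and later works with the copy on $\{0,1,2,3,6,12,13\}$). Your alternative copy on $\{0,2,4,9,12,14,15\}$ checks out --- all nine image differences lie in the quadratic residue set $\{\pm1,\pm2,\pm4,\pm8\}$ and all twelve non-edge differences do not --- so the particular choice of subset is immaterial.
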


\begin{center}

\includegraphics[scale=.4]{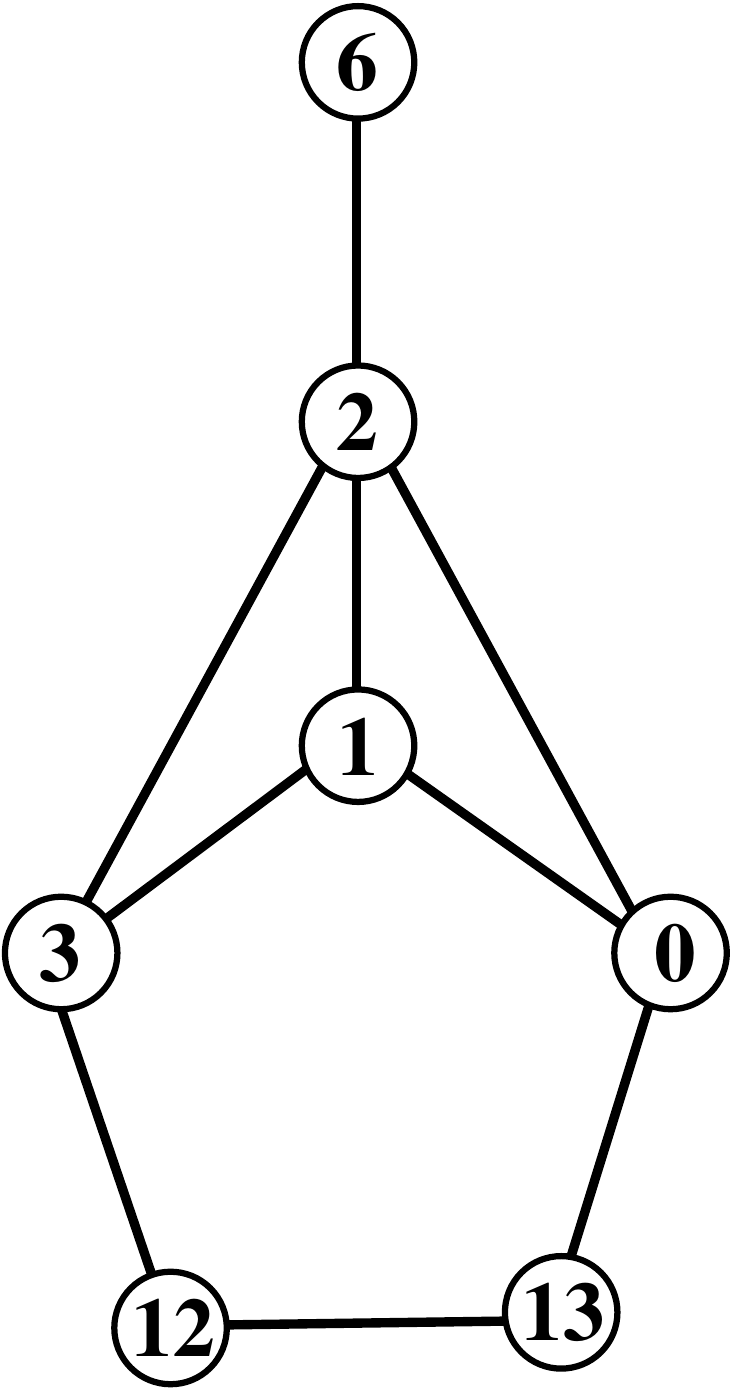}
\hspace{2cm}
\includegraphics[scale=.3]{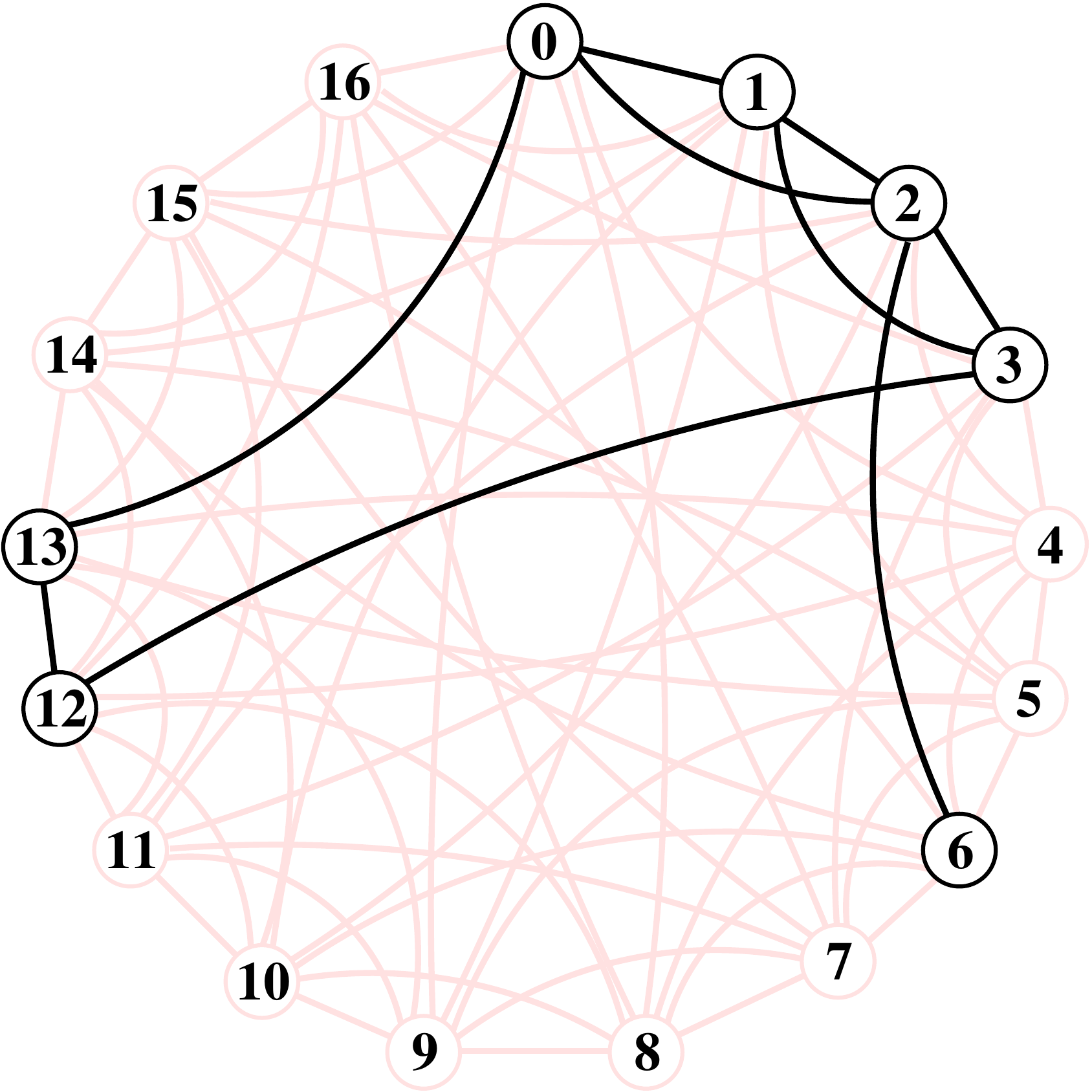}
\hfill
\end{center}

\begin{lemma}\label{lem:posnegevals2} Let the product $aefghi\neq 0$.  
\begin{itemize} 
\item If $efghi<0$, then $n_+(W_2)=3$ and $n_-(W_2)=4$.
\item If $efghi>0$, then $n_+(W_2)=4$ and $n_-(W_2)=3$. 
\end{itemize}
\end{lemma}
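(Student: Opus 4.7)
The plan is to follow the template of Lemma \ref{lem:posnegevals} almost verbatim, the only new ingredients being the sign of $\det(W_2)$ (already recorded in Observation \ref{obs:nonzerodet2}) and the value of $\alpha(G_2)$. First I would verify that $\alpha(G_2)=3$. Since $G_2$ is an induced subgraph of $P(17)$ (Observation \ref{obs:indsub2}) and $\alpha(P(17))=3$ (Observation \ref{obs:alphapaley17}), we have $\alpha(G_2)\le 3$; inspecting the edge list of $G_2$ one exhibits a coclique of size $3$ (for instance the three degree-$\le 2$ vertices that are pairwise nonadjacent in the drawing), so $\alpha(G_2)=3$.

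Next, apply the inertia bound (Equation~(\ref{eq:inertiabound})) to $G_2$ with weight matrix $W_2$. This gives
$$3=\alpha(G_2)\le \min\{7-n_+(W_2),\,7-n_-(W_2)\},$$
so $n_+(W_2)\le 4$ and $n_-(W_2)\le 4$. The hypothesis $aefghi\ne 0$ together with Observation \ref{obs:nonzerodet2} implies $\det(W_2)=-2a^2efghi\ne 0$, hence $n_0(W_2)=0$ and $n_+(W_2)+n_-(W_2)=7$. Combined with the two inequalities above, the only possibilities are $(n_+,n_-)=(3,4)$ or $(4,3)$.

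Finally, I would read off which case occurs from the sign of $\det(W_2)$, which equals the product of the eigenvalues. Since $a^2>0$, the sign of $-2a^2efghi$ is the opposite of the sign of $efghi$. If $efghi<0$ then $\det(W_2)>0$, so the number of negative eigenvalues must be even, forcing $n_-(W_2)=4$ and $n_+(W_2)=3$. If $efghi>0$ then $\det(W_2)<0$, so the number of negative eigenvalues is odd, forcing $n_-(W_2)=3$ and $n_+(W_2)=4$. This completes the two cases of the lemma.

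No step looks genuinely hard; the only place requiring care outside the template is confirming $\alpha(G_2)=3$, and that is a finite inspection on $7$ vertices.
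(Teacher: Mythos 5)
Your proposal is correct and follows essentially the same argument as the paper: bound $n_+(W_2)$ and $n_-(W_2)$ by $4$ via the inertia bound with $\alpha(G_2)=3$, use $\det(W_2)=-2a^2efghi\neq 0$ to rule out a zero eigenvalue, and read off the inertia from the sign of the determinant. The only difference is that you take the extra (reasonable) step of justifying $\alpha(G_2)=3$, which the paper simply asserts.
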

\begin{proof}  Apply Equation (\ref{eq:inertiabound}) to $G_2$ and $W_2$.  Since $\alpha(G_2)=3$, we have that $$3\leq \min\{7-n_+(W_2),7-n_-(W_2)\}.$$ Thus $n_-(W_2)\leq 4$ and $n_+(W_2)\leq 4$.  Since $aefghi\neq 0$, $\det(W_2)\neq 0$. Thus either $n_+(W_2)=4$ and $n_-(W_2)=3$ or $n_+(W_2)=3$ and $n_-(W_2)=4$.  Further, if $efghi<0$, then $\det(W_2)>0$.  So the product of the eigenvalues is positive and $n_-(W_2)=4$.  On the other hand, if $efghi>0$, then $\det(W_2)<0$ and $n_-(W_1)=3$.

\end{proof}

\section{Optimal weight matrices}\label{sec:weight matrices}

We now continue with the strategy stated in Section \ref{sec:prelim} by considering optimal weight matrices of $P(17)$.  Recall, a weight matrix $W$ of $G$ is $\emph{optimal}$ if $\alpha(G)=\min\{n-n_+(W),n-n_-(W)\}$.  The following observation can be found in \cite{elzinga} and follows by using the Cvetkovi\'c bound on $G-e$.

\begin{observation}\label{obs:noedgedeletion}(Lemma 7.2 in \cite{elzinga}) Let $G$ be $\alpha$-critical and $W$ an optimal weight matrix of $G$. Then $w_{ij}\neq 0$ for all $ij\in E(G)$.
\end{observation}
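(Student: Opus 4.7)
The plan is a short proof by contradiction using the key observation that the weight-matrix condition is preserved under adding zeros on edges. Suppose, for contradiction, that $w_{ij}=0$ for some edge $ij\in E(G)$. Since $W$ is a weight matrix of $G$, we have $w_{kl}=0$ whenever $kl\notin E(G)$; combined with the assumption $w_{ij}=0$, this means $w_{kl}=0$ for every pair $kl$ outside $E(G-ij)$. Hence $W$ qualifies as a weight matrix of $G-ij$ as well.

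Next I would apply the Cvetkovi\'c bound (Equation (\ref{eq:inertiabound})) to the graph $G-ij$ with the same matrix $W$, yielding
$$\alpha(G-ij)\leq \min\{n-n_+(W),n-n_-(W)\}=\alpha(G),$$
where the final equality uses the optimality of $W$ as a weight matrix of $G$.

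This contradicts $\alpha$-criticality of $G$, which demands $\alpha(G-ij)>\alpha(G)$. Therefore no such zero entry can exist, proving $w_{ij}\neq 0$ for every edge $ij\in E(G)$.

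There is really no major obstacle here; the only step that requires care is the first one, namely recognizing that zeroing out an entry corresponding to an edge of $G$ simply relaxes the support constraint, so the same matrix continues to be a legal weight matrix after deleting the edge. Once that is noted, the bound on $\alpha(G-ij)$ and the definition of $\alpha$-critical collide immediately to produce the contradiction.
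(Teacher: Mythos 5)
Your proof is correct and follows exactly the route the paper indicates (it gives no detailed proof, only the remark that the observation ``follows by using the Cvetkovi\'c bound on $G-e$''): you observe that a zero entry on an edge makes $W$ a legal weight matrix for $G-ij$, apply the bound there, and contradict $\alpha$-criticality via the optimality of $W$. Nothing to add.
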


In light of Observations \ref{obs:noedgedeletion} and \ref{obs:alphacritical} we have the following observation.

\begin{observation}\label{obs:nonzero} Any optimal weight matrix $W$ of $P(17)$ has $w_{ij}\neq 0$ if $ij$ is an edge of $P(17)$.
\end{observation}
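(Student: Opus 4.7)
The plan is to obtain Observation \ref{obs:nonzero} as an immediate corollary of Observation \ref{obs:alphacritical} (which states $P(17)$ is $\alpha$-critical) and Observation \ref{obs:noedgedeletion} (which states that every optimal weight matrix of an $\alpha$-critical graph has nonzero entries at all edge positions). Since these two results together already contain the full content of the claim, the only task is to specialize them to $G = P(17)$.

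For self-containment I would briefly reprove Observation \ref{obs:noedgedeletion} in this setting. Suppose for contradiction that $W$ is an optimal weight matrix for $P(17)$ and there exists an edge $ij \in E(P(17))$ with $w_{ij} = 0$. The key observation is that the definition of a weight matrix is asymmetric: entries are required to be zero on non-edges, but edges are allowed to carry zero weight. Consequently, $W$ is still a valid weight matrix for the edge-deleted graph $P(17) - ij$, since the zero pattern of $W$ includes all non-edges of $P(17) - ij$. Applying the inertia bound (\ref{eq:inertiabound}) to $W$ as a weight matrix of $P(17) - ij$ yields
\[
\alpha(P(17) - ij) \;\leq\; \min\{17 - n_+(W),\, 17 - n_-(W)\} \;=\; \alpha(P(17)),
\]
where the equality uses optimality of $W$ for $P(17)$. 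But Observation \ref{obs:alphacritical} says $P(17)$ is $\alpha$-critical, so $\alpha(P(17) - ij) > \alpha(P(17))$, a contradiction.

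There is no real obstacle here; the statement is a two-line deduction. The only subtle point is to recognize that a weight matrix's zero pattern need only be \emph{contained in} (not equal to) the edge set, so a hypothetical zero edge-weight lets the same matrix serve as a weight matrix for a strictly smaller graph, where the inertia bound can be applied against the $\alpha$-critical property.
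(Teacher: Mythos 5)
Your proposal is correct and matches the paper's approach exactly: the paper derives this observation as an immediate consequence of Observations \ref{obs:alphacritical} and \ref{obs:noedgedeletion}, and your expanded justification of the latter (a zero edge-weight makes $W$ a valid weight matrix for $P(17)-ij$, so the inertia bound contradicts $\alpha$-criticality) is precisely the mechanism the paper attributes to \cite{elzinga}.
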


Recall that entries of weight matrices corresponding to edges are not necessarily nonzero.  However, Observation \ref{obs:nonzero} states that edges correspond to nonzero entries in optimal weight matrices for Paley 17.  This necessary fact will allow the use of Lemmas \ref{lem:posnegevals} and \ref{lem:posnegevals2}.

Given a graph $G$ and a weight matrix $W$, the \emph{sign} of a triangle is the sign of the product of the entries of $W$ corresponding to edges of the triangle of $G$.

\begin{lemma}\label{lem:triangleshavesamesign} Let $W$ be an optimal weight matrix for $P(17)$.  Every triangle of $P(17)$ has the same sign.
\end{lemma}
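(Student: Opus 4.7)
The plan is to argue by contradiction using Lemma \ref{lem:4iscontradicting}. Suppose two triangles $T_+$ and $T_-$ of $P(17)$ have opposite signs with respect to $W$. I would exhibit from each a $7 \times 7$ principal submatrix of $W$ that realizes the matrix $W_1$ of Section \ref{sec:indsub}, apply Lemma \ref{lem:posnegevals} to conclude that the first has four negative eigenvalues and the second has four positive eigenvalues, and finally invoke Lemma \ref{lem:4iscontradicting} (with $\alpha(P(17))+1=4$) to contradict the optimality of $W$.

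To build these two submatrices, I would first use Observation \ref{obs:nonzero} to guarantee that every edge weight of $W$ on $P(17)$ is nonzero, so the nonzero-product hypothesis of Lemma \ref{lem:posnegevals} is automatic. Next I would combine Observation \ref{ons:indsub1} (that $G_1$ is an induced subgraph of $P(17)$) with Lemma \ref{lem:autoacttranstriangles} (transitivity of $\mathrm{Aut}(P(17))$ on triangles): for any triangle $T$ of $P(17)$, pick an automorphism $\sigma$ mapping the central triangle of the fixed induced $G_1$ onto $T$; then $\sigma(V(G_1))$ induces another copy of $G_1$ whose central triangle is $T$. The principal submatrix of $W$ on $\sigma(V(G_1))$ then has the form $W_1$, and its triangle-entry product $abc$ equals the product of the three $W$-entries on the edges of $T$, so $\sign(abc) = \sign(T)$. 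Applying this to $T_+$ and $T_-$ separately produces the required pair of principal submatrices.

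The main obstacle I expect is the bookkeeping in the previous paragraph: one has to check that $\sigma$, which is a graph automorphism of $P(17)$, carries the induced subgraph $G_1$ to another \emph{induced} copy of $G_1$, and that the three distinguished triangle vertices of $G_1$ land on the three vertices of $T$. The first is automatic from $\sigma$ preserving both adjacency and non-adjacency; the second is what Lemma \ref{lem:autoacttranstriangles} asserts at the level of triangles as sets, and since the sign of a triangle depends only on the unordered product of its three edge weights the choice of vertex labeling within $T$ is immaterial. Once this is pinned down, Lemma \ref{lem:4iscontradicting} immediately delivers the contradiction, since $W$ would simultaneously have a principal submatrix with $4$ positive eigenvalues and one with $4$ negative eigenvalues.
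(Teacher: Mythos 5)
Your proposal is correct and follows essentially the same route as the paper: a contradiction via Lemma \ref{lem:4iscontradicting}, using the triangle-transitivity of $\mathrm{Aut}(P(17))$ (Lemma \ref{lem:autoacttranstriangles}) to place each of the two oppositely-signed triangles inside an induced copy of $G_1$, Observation \ref{obs:nonzero} to ensure nonzero edge weights, and Lemma \ref{lem:posnegevals} to extract one principal submatrix with four positive and one with four negative eigenvalues. The paper merely fixes $\Delta(0,1,2)$ as the reference triangle (negating $W$ if necessary) rather than treating $T_+$ and $T_-$ symmetrically, which is an immaterial difference.
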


\begin{proof} Let $W$ be an optimal weight matrix for $P(17)$. Suppose by way of contradiction that there exists a triangle $\Delta_1$ which has a different sign than $\Delta(0,1,2)$. Without loss of generality, negating $W$ if necessary, suppose that the sign of $\Delta(0,1,2)$ is negative. By Lemma \ref{lem:autoacttranstriangles}, there exists an automorphism $\sigma$ of $P(17)$ which maps $\Delta(0,1,2)$ to $\Delta_1$ .  Thus $\sigma$ maps $G_1$ to $H_1$, an isomorphic copy of $G_1$ containing $\Delta_1$.  By Observation \ref{obs:nonzero} the entries of $W$ corresponding to edges are nonzero. Thus Lemma \ref{lem:posnegevals} and the fact that the signs of the triangles are different imply that the principal submatrix of $W$ corresponding to $G_1$ has 4 positive eigenvalues while the principal submatrix of $W$ corresponding to $H_1$ has 4 negative eigenvalues.  By  Lemma \ref{lem:4iscontradicting} and Observation \ref{obs:alphapaley17}, $W$ is not optimal. Contradiction. Therefore all triangles have the same sign.
\end{proof}

\begin{lemma}\label{lem:path} Let $A$ be a symmetric $n\times n$ matrix with nonzero sub and super diagonal.   There exists a $(1,-1)$-diagonal matrix $D$ such that the sub and super diagonals of $DAD$ are positive.
\end{lemma}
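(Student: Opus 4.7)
The plan is a direct recursive construction of the sign pattern of $D$. Write $D=\mathrm{diag}(d_1,\dots,d_n)$ with each $d_i\in\{1,-1\}$. Since $D$ is diagonal, $(DAD)_{ij}=d_id_jA_{ij}$, so the super-diagonal entries we care about are $(DAD)_{i,i+1}=d_id_{i+1}A_{i,i+1}$ for $i=1,\dots,n-1$. Requiring each of these to be positive is the same as requiring
\[
d_id_{i+1}=\sign(A_{i,i+1}) \qquad (i=1,\dots,n-1),
\]
which is a system of $n-1$ constraints in the $n$ unknown signs $d_1,\dots,d_n$.

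First I would fix $d_1=1$ (any choice works), and then define $d_{i+1}$ recursively by $d_{i+1}=d_i\cdot\sign(A_{i,i+1})$ for $i=1,\dots,n-1$. This is well defined because $A_{i,i+1}\ne 0$ by hypothesis, so $\sign(A_{i,i+1})\in\{1,-1\}$, and hence $d_{i+1}\in\{1,-1\}$. By construction, $d_id_{i+1}=d_i^2\sign(A_{i,i+1})=\sign(A_{i,i+1})$, and therefore
\[
(DAD)_{i,i+1}=d_id_{i+1}A_{i,i+1}=\sign(A_{i,i+1})\cdot A_{i,i+1}=|A_{i,i+1}|>0.
\]
Since $A$ is symmetric and $D$ is diagonal, $DAD$ is symmetric, so $(DAD)_{i+1,i}=(DAD)_{i,i+1}>0$ as well, which takes care of the sub-diagonal.

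There is essentially no obstacle here; the argument is a one-pass greedy/inductive choice of signs along the path $1\to 2\to\cdots\to n$, and the only hypothesis used is that the super-diagonal entries are nonzero (so that each $\sign(A_{i,i+1})$ is well defined and nonzero). The content of the lemma is really just that the tridiagonal sign pattern imposed by the super-diagonal of $A$ can always be flipped to all-positive by an appropriate $(1,-1)$-conjugation, which is possible because the path graph on $n$ vertices is a tree and therefore has no sign obstructions along cycles.
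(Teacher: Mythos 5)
Your proof is correct and is essentially the paper's argument: the paper runs an induction on $n$, choosing the last diagonal sign to make the new super-diagonal entry positive, and your recursion $d_{i+1}=d_i\cdot\sign(A_{i,i+1})$ is just that induction unrolled into an explicit formula. If anything, your version is slightly cleaner, since writing $d_{i+1}=d_i\sign(A_{i,i+1})$ makes explicit that the new sign must depend on $d_i$ and not only on $\sign(a_{k,k+1})$.
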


\begin{proof} We proceed by induction on $n$.  If $n=2$ and $a_{12}=a_{21}> 0$ we are done. So assume $a_{12}<0$. Let $D$ be a diagonal matrix with exactly one 1 and one -1.  Then the off-diagonal entries of $DAD$ are positive.

Now assume that the hypothesis is true for $n=k$.  Let $A$ be a symmetric matrix of order $k+1$  with nonzero sub and super diagonals.    Since $A(k+1)$ ($A$ with $k+1$th row and column deleted) is a $k\times k$ symmetric matrix, there exists a $(1,-1)$-diagonal matrix $D_1$ such that the sub and super diagonal entries of $D_1A(k+1)D_1$ are positive. If $a_{k,k+1}=a_{k+1,k}>0$, then $D=D_1\oplus [1]$.  On the other hand, if $a_{k,k+1}=a_{k+1,k}<0$ then $D=D_1\oplus[-1]$.  Then the sub and super diagonal entries of $DAD$ will be positive.
\end{proof}

Note that in Lemma \ref{lem:path}, $D^2$ is the identity matrix.  Thus $DAD$ and $A$ are similar and have the same eigenvalues.  In light of this fact, we may assume that any weight matrix for $P(17)$ has positive entries on its super and sub diagonal.  These entries correspond to all the 1-edges of $P(17)$ except for edge $(16,0)$.  So we will have two cases to consider.  One case where all the 1-edges are positive and one where all the 1-edges are positive except for edge $(16,0)$ which is negative.

\begin{lemma}\label{lem:allpos} Let $W$ be an optimal weight matrix of $P(17)$ and assume that all the 1-edges are positive.  Then all edges of $P(17)$ are positive and the sign of every triangle is positive.
\end{lemma}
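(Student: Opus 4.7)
The plan is to exploit Lemma \ref{lem:triangleshavesamesign}: in any optimal weight matrix of $P(17)$ all triangles share a common sign $s\in\{+1,-1\}$. I will show that the positivity assumption on the $1$-edges forces every $k$-edge to have sign $s$ and then forces $s=+1$.

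The structural input is the classification of triangles recorded in the proof of Lemma \ref{lem:autoacttranstriangles}: every triangle of $P(17)$ is either $1$-$1$-$2$, $2$-$2$-$4$, $4$-$4$-$8$, or $8$-$8$-$1$, and in each class the $17$ triangles are precisely $\Delta(i,i+2^{j},i+2^{j+1})$ for $i\in\mathbb{Z}/17$ and a suitable $j$. I would walk these four families in order. First, each $1$-$1$-$2$ triangle $\Delta(i,i+1,i+2)$ has two positive $1$-edges by hypothesis, so its sign equals $\operatorname{sign}(w_{i,i+2})$; since this must equal $s$, every $2$-edge has sign $s$. The same trick on the $2$-$2$-$4$ triangles $\Delta(i,i+2,i+4)$ (where both $2$-edges now have sign $s$) yields that every $4$-edge has sign $s$, and then the $4$-$4$-$8$ triangles $\Delta(i,i+4,i+8)$ give sign $s$ to every $8$-edge.

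The last step closes the loop using the $8$-$8$-$1$ triangles $\Delta(i,i+8,i+16)$, where $i+16\equiv i-1\pmod{17}$ so the third edge is indeed a $1$-edge. The two $8$-edges contribute $s\cdot s$ and the $1$-edge contributes $+1$, so the triangle sign is $s^{2}=1$; but this sign must also equal $s$, which forces $s=+1$. Combined with the previous step this shows all edges of $P(17)$ are positive and all triangles have positive sign. I do not expect a substantive obstacle here; the only thing to check carefully is the chain of implications $\text{$1$-edges}\to\text{$2$-edges}\to\text{$4$-edges}\to\text{$8$-edges}\to s$, which rests on the fact that each triangle class has exactly two edges of one length and one of the next length in the doubling sequence $1,2,4,8,16\equiv-1$.
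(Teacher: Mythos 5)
Your proof is correct and follows essentially the same route as the paper: propagate the common triangle sign $s$ along the chain of $1$-$1$-$2$, $2$-$2$-$4$, and $4$-$4$-$8$ triangles and then close the loop with an $8$-$8$-$1$ triangle. The only difference is presentational—the paper runs the argument twice as a case analysis (ruling out ``all triangles negative'' via $\Delta(0,8,9)$, then rerunning the chain in the positive case), whereas you unify both cases by deriving $s^{2}=s$, which is a slightly cleaner way to package the identical computation.
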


\begin{proof} Let $W$ be an optimal weight matrix of $P(17)$.  By Lemma \ref{lem:triangleshavesamesign}, all the triangles of $P(17)$ have the same sign.  Suppose that the sign of every triangle is negative. Since every 2-edge belongs to a 1-1-2 triangle and all 1-edges are positive, all the 2-edges are negative.  Since every 4-edge belongs to a 2-2-4 triangle and the 2-edges are negative, all the 4-edges are negative.  Since every 8-edge belongs to a 4-4-8 triangle and the 4-edges are negative, the 8-edges are negative.  But $\Delta(0,8,9)$ is an 8-8-1 triangle which has positive sign.  Thus it is not possible for the 1-edges to be positive and have every triangle be negative.

Thus the sign of every triangle is positive.  Following a similar argument as above, 1-edges force 2-edges to be positive which force 4-edges to be positive which force 8-edges to be positive. Thus all the edges of $P(17)$ are positive as desired.
\end{proof}

\begin{lemma}\label{lem:oneneg} Let $W$ be an optimal weight matrix of $P(17)$ and assume that all the 1-edges are positive with the exception of edge $(16,0)$.  Then the remaining edges are negative with the exception of 2-edges $(1,16)$, $(0,15)$, 4-edges $(0,13)$, $(1,14)$, $(2,15)$, $(3,16)$, and 8-edges $(0,9)$, $(1,10)$, $(2,11)$, $(3,12)$, $(4,13)$, $(5,14)$, $(6,15)$, $(7,16)$.  Further, the sign of every triangle is negative.
\end{lemma}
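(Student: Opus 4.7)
The strategy parallels the proof of Lemma \ref{lem:allpos}: invoke Lemma \ref{lem:triangleshavesamesign} to force every triangle to carry a common sign $s\in\{+1,-1\}$, propagate this constraint through the 2-factors of $k$-edges for $k=2,4,8$, and finally pin down $s$ by examining a triangle involving the negative 1-edge $(16,0)$.

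First, run through the 1-1-2 triangles $\{i,i+1,i+2\}$ for $i\in\{0,\ldots,16\}$. For $i\in\{0,\ldots,14\}$ both 1-edges are positive, so the 2-edge $(i,i+2)$ must have sign $s$. The two remaining triangles $\{15,16,0\}$ and $\{16,0,1\}$ each contain exactly one copy of the negative 1-edge $(16,0)$, forcing the 2-edges $(0,15)$ and $(1,16)$ to have sign $-s$. Apply the same reasoning to the 2-2-4 triangles $\{v,v+2,v+4\}$: a triangle with both 2-edges of sign $s$ produces a 4-edge of sign $s$, while one containing exactly one anomalous 2-edge produces a 4-edge of sign $-s$. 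Since $(0,15)$ and $(1,16)$ are not consecutive on the 2-cycle, they lie in four distinct 2-2-4 triangles, namely $\{13,15,0\}, \{15,0,2\}, \{14,16,1\}, \{16,1,3\}$, each with exactly one anomalous 2-edge; this yields the four anomalous 4-edges $(0,13),(1,14),(2,15),(3,16)$, while the other thirteen 4-edges inherit sign $s$.

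Iterating once more at the 4-4-8 level, one checks that the four anomalous 4-edges are again not consecutive on the 4-cycle, so each lies in two 4-4-8 triangles that otherwise contain a normal 4-edge. This produces eight distinct anomalous 8-edges, which work out to be exactly $(0,9),(1,10),(2,11),(3,12),(4,13),(5,14),(6,15),(7,16)$, while the remaining nine 8-edges have sign $s$. To determine $s$, I would evaluate the sign of the unique 8-8-1 triangle containing $(16,0)$, namely $\{0,8,16\}$: its 1-edge is negative and its two 8-edges $(0,8)$ and $(8,16)$ are both non-anomalous, so the sign of the triangle is $(-1)\cdot s\cdot s=-1$, forcing $s=-1$. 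A routine scan of the other sixteen 8-8-1 triangles $\{i,i+1,i+9\}$ confirms consistency: each has a positive 1-edge and exactly one anomalous 8-edge, giving sign $(+1)\cdot s\cdot(-s)=-s^2=-1$.

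The substantive work is bookkeeping: correctly identifying which edges in each 2-factor are anomalous, then verifying the consistency check at the 8-8-1 level. The only potentially delicate point is ensuring that the unique 8-8-1 triangle through $(16,0)$ contains no anomalous 8-edge, which is what allows its sign equation to uniquely determine $s=-1$ rather than yielding a tautology or contradiction.
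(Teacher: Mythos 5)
Your proposal is correct and follows essentially the same route as the paper: propagate the common triangle sign through the 1-1-2, 2-2-4, and 4-4-8 triangles along the 2-factorization, then use an 8-8-1 triangle to resolve the sign. The only cosmetic differences are that you carry a symbolic sign $s$ in one pass and pin it down via the triangle $\{0,8,16\}$ (whose two 8-edges are both non-anomalous), whereas the paper assumes all triangles are positive, reaches a contradiction via $\Delta(0,8,9)$ (whose 8-edge $(0,9)$ is anomalous), and then notes the negative case simply flips all the derived signs.
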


\begin{proof} Let $W$ be an optimal weight matrix of $P(17)$.  By Lemma \ref{lem:triangleshavesamesign}, all the triangles of $P(17)$ have the same sign.  Suppose that the sign of every triangle is positive.  Since every 2-edge belongs to a 1-1-2 triangle and all 1-edges except $(0,16)$ are positive, all 2-edges are positive with the exception of edge $(1,16)$ and $(0,15)$.  Since every 4-edge belongs to a 2-2-4 triangle and all the 2-edges are positive except $(1,16)$ and $(0,15)$, all 4-edges are positive with the exception of $(0,13),(1,14),(2,15),(3,16)$.
Since every 8-edge belongs to a 4-4-8 traingle and all the 4-edges are positive except $(0,13),(1,14),(2,15),(3,16)$, all the 8-edges are positive with the exception of edges $(0,9),(1,10),(2,11),(3,12),(4,13),(5,14),(6,15),(7,16)$.  Consider the 8-8-1 triangle, $\Delta(0,8,9)$.  Edge $(0,9)$ is the only negative edge of the triangle and thus $\Delta(0,8,9)$ has negative sign.  This contradicts that the sign of every triangle is positive.

Thus the sign of every triangle is negative.  Note that changing the sign of the triangles from positive to negative just switches the sign of all the 2-edges, 4-edges, and 8-edges in the above argument.  Therefore all the 2-edges are negative with the exception of $(1,16),(0,15)$, all the 4-edges are negative with the exception of $(0,13),(1,14),(2,15),(3,16)$, and all the 8-edges are negative with the exception of $(0,9),(1,10),(2,11),(3,12),(4,13),(5,14),(6,15)$, $(7,16)$.
\end{proof}

\begin{theorem}\label{thm:nottight} The Paley graph on 17 vertices has no optimal weight matrices.
\end{theorem}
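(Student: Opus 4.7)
The plan is to assume, for contradiction, that an optimal weight matrix $W$ of $P(17)$ exists and to produce two principal submatrices of $W$, one with $4=\alpha(P(17))+1$ positive eigenvalues and one with $4$ negative eigenvalues, so that Lemma \ref{lem:4iscontradicting} supplies the contradiction. By Observation \ref{obs:nonzero} every edge of $P(17)$ has a nonzero entry in $W$. Applying Lemma \ref{lem:path} to the 17-path of consecutive 1-edges, I would conjugate $W$ by a $(\pm 1)$-diagonal matrix (which preserves inertia) so that $(0,1),(1,2),\ldots,(15,16)$ are all positive. The single remaining 1-edge $(16,0)$ is then either positive or negative, matching exactly the hypotheses of Lemma \ref{lem:allpos} and Lemma \ref{lem:oneneg}; this splits the argument into two cases.

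In the first case, when every 1-edge is positive, Lemma \ref{lem:allpos} tells us that every edge of $P(17)$ is positive and every triangle has positive sign. Pick any induced copy of $G_1$ (Observation \ref{ons:indsub1}) and any induced copy of $G_2$ (Observation \ref{obs:indsub2}). Then $abc>0$ and $efghi>0$, so Lemma \ref{lem:posnegevals} yields a $7\times 7$ principal submatrix with $4$ negative eigenvalues and Lemma \ref{lem:posnegevals2} yields one with $4$ positive eigenvalues. Lemma \ref{lem:4iscontradicting} then shows $W$ is not optimal.

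In the second case, when every 1-edge except $(16,0)$ is positive, Lemma \ref{lem:oneneg} pins down the sign of every edge of $P(17)$ and states that every triangle is negative. Using any induced copy of $G_1$, the triangle-product satisfies $abc<0$, so Lemma \ref{lem:posnegevals} already supplies a principal submatrix with $4$ positive eigenvalues. To finish, it suffices to exhibit an induced copy of $G_2$ whose five ``determinant-relevant'' edges $e,f,g,h,i$ (the edges of the $5$-cycle $3\text{-}4\text{-}5\text{-}7\text{-}6$ inside $G_2$) have product of negative sign; Lemma \ref{lem:posnegevals2} will then furnish a submatrix with $4$ negative eigenvalues and Lemma \ref{lem:4iscontradicting} again closes the argument.

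The main obstacle is this last step. The sign table in Lemma \ref{lem:oneneg} is intricate, listing a small and asymmetric set of exceptional positive $2$-, $4$-, and $8$-edges against an otherwise uniformly negative background, so no symmetry alone decides the sign of $efghi$. I plan to fix one concrete embedding of $G_2$ into $P(17)$, possibly translated by an automorphism $\sigma_{ab}$ from Lemma \ref{lem:automorphism} to land the base triangle conveniently, and then simply tally how many of the five edges $34,36,45,57,67$ of the embedding fall among the exceptional positive edges of Lemma \ref{lem:oneneg}. Since the parity of this count is exactly the sign of $efghi$, a single embedding in which the count is odd completes the proof, and the check is a direct finite computation.
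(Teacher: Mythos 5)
Your outline follows the paper's proof essentially step for step: normalize via Lemma \ref{lem:path}, split on the sign of $(16,0)$, invoke Lemma \ref{lem:allpos} or Lemma \ref{lem:oneneg} to fix all edge signs, and then play a copy of $G_1$ against a copy of $G_2$ to trigger Lemma \ref{lem:4iscontradicting}. Two things remain to make case 2 airtight. First, your parity criterion is off as stated: you propose to count only how many of the five cycle edges lie among the \emph{exceptional} positive $2$-, $4$-, $8$-edges of Lemma \ref{lem:oneneg}, but any $1$-edges lying on that $5$-cycle are also positive and must be counted; what actually determines $\sign(efghi)$ is the parity of the number of \emph{negative} edges on the cycle, which you need to be odd. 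Second, you never exhibit the embedding. The paper uses $G_2$ on vertices $0,1,2,3,6,12,13$, whose $5$-cycle is $0$--$1$--$3$--$12$--$13$--$0$: here $(0,1)$ and $(12,13)$ are positive $1$-edges, $(0,13)$ and $(3,12)$ are among the exceptional positive $4$- and $8$-edges, and $(1,3)$ is a negative $2$-edge, so exactly one cycle edge is negative, $efghi<0$, and Lemma \ref{lem:posnegevals2} gives the required four negative eigenvalues. Note that under your stated criterion (exceptional edges only, count odd) this embedding would be rejected --- its exceptional count is $2$ --- even though it is precisely the one that works.
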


\begin{proof}  Suppose by way of contradiction that $W$ is an optimal weight matrix of Paley 17. By Lemma \ref{lem:path}, there exists a diagonal matrix $D$ such that the sub and super diagonal entries of $DWD$ are positive.  As $D$ is a diagonal matrix the zero-nonzero structure of $W$ is maintained.  Further, as $D^2$ is the identity, $DWD$ is similar to $W$.  Thus $DWD$ is an optimal weight matrix for $P(17)$.  Let $DWD=\widetilde{W}$.  Notice that the sub and super diagonal entries of $\widetilde{W}$ correspond to the 1-edges of $P(17)$ except $(0,16)$.  As we will be using Lemmas \ref{lem:posnegevals} and \ref{lem:posnegevals2}, we point out that by Observation \ref{obs:nonzero} all the entries of $\widetilde{W}$ corresponding to edges are nonzero.

First we consider the case where $(0,16)$ is positive.  By Lemma \ref{lem:allpos}, all the edges of $P(17)$ are positive.  In other words all the entries of $\widetilde{W}$ corresponding to edges are positive.  By Observation \ref{obs:indsub2}, $G_2$ is an induced subgraph of $P(17)$.  By Lemma \ref{lem:posnegevals2}, $\widetilde{W}$ has a principal submatrix corresponding to $G_2$ which has four positive eigenvalues.  On the other hand, Lemmas \ref{lem:allpos} and \ref{lem:posnegevals} imply that $\widetilde{W}$ has a principal submatrix corresponding to $G_1$ which has four negative eigenvalues.  

Now the case where $(0,16)$ is negative and the rest of the 1-edges are positive. By Lemma \ref{lem:oneneg}, the remaining edges are negative with the exception of 2-edges $(1,16)$, $(0,15)$, 4-edges $(0,13)$, $(1,14)$, $(2,15)$, $(3,16)$, and 8-edges $(0,9)$, $(1,10)$, $(2,11)$, $(3,12)$, $(4,13)$, $(5,14)$, $(6,15)$, $(7,16)$.   Consider the edges of the five cycle in the induced subgraph $G_2$ with vertices $0$, $1$, $2$, $3$, $6$, $12$, and $13$.  Edges $(0,1)$, $(0,13)$, $(13,12)$,$(12,3)$ are positive, while edge $(1,3)$ is negative.  Thus by Lemma \ref{lem:posnegevals2}, the principal submatrix of $\widetilde{W}$ corresponding to $G_2$ has four negative eigenvalues.  On the other hand, Lemmas \ref{lem:oneneg} and \ref{lem:posnegevals} imply that $\widetilde{W}$ has a principal submatrix corresponding to $G_1$ which has four positive eigenvalues.

Thus in either case $\widetilde{W}$ has principal submatrices which have four positive and four negative eigenvalues.  Recall (Observation \ref{obs:alphapaley17}) that $\alpha(P(17))=3$.   By Lemma \ref{lem:4iscontradicting}, $\widetilde{W}$ is not optimal.  Contradiction. Thus there does not exist an optimal weight matrix of Paley 17.\end{proof}

\section{Paley 17 delete a vertex}

After having shown that the inertia bound for Paley 17 is not tight, it is natural to consider its induced subgraphs.   In fact deleting a vertex from Paley 17 yields a graph which is also not inertia tight.

\begin{theorem}\label{thm:paley17minus0} The inertia bound for the graph obtained from Paley 17 by deleting a vertex is not tight.
\end{theorem}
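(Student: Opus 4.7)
I follow the template of Theorem~\ref{thm:nottight} applied to $G := P(17) - v$. By vertex-transitivity of $P(17)$ take $v = 0$, and note that $\alpha(G) = 3$ (since $P(17)$ is vertex-transitive, the number of independent 3-sets through any fixed vertex is a proper fraction of the total, so some independent 3-sets avoid $0$).

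First, I would show that $G$ is $\alpha$-critical. For any edge $\{u, w\}$ of $G$, the four common non-neighbors of $\{u, w\}$ in $P(17)$ induce a path on four vertices; one verifies this for the representative edge $(1, 2)$ (whose common non-neighbors $\{7, 8, 12, 13\}$ induce the path $7 - 8 - 12 - 13$), and extends to all edges by edge-transitivity of $P(17)$. The three non-edges of this path give three independent pairs $\{x, y\}$ completing $\{u, w\}$ to a 4-element independent set of $P(17) - \{u, w\}$. Since each vertex of the path lies in at most two of its three non-edges, at least one pair $\{x, y\}$ avoids $0$, so $\alpha(G - e) \geq 4$ for every edge $e$ of $G$. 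Observation~\ref{obs:noedgedeletion} then implies that every optimal weight matrix $W$ of $G$ has nonzero entries on all edges.

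Next, I adapt the sign arguments. Since the automorphisms $\sigma_{a, b}$ act transitively on the triangles of $P(17)$ and $G_1$, $G_2$ are both induced subgraphs of $P(17)$, each triangle of $G$ lies in an induced copy of $G_1$ (and of $G_2$) within $G$, obtainable from the canonical embedding by choosing an automorphism whose image avoids vertex $0$ (for example, translating the canonical $G_1$-embedding by $+1$ places the 1-1-2 triangle $\Delta(1, 2, 3)$ inside an induced copy of $G_1$ in $G$). Lemma~\ref{lem:triangleshavesamesign} then carries over, forcing all triangles of $G$ to share a common sign. Moreover, the 1-edges of $G$ form the path $1 - 2 - \cdots - 16$ rather than the 17-cycle of $P(17)$, so a single application of Lemma~\ref{lem:path} makes every 1-edge positive and collapses the two cases of Lemmas~\ref{lem:allpos} and \ref{lem:oneneg} into one. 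After WLOG negating $W$, all triangles are positive; chaining through the 1-1-2, 2-2-4, 4-4-8, and 8-8-1 triangles present in $G$---with secondary chases for the handful of edges such as $(1, 16), (1, 14), (2, 15), (3, 16)$ whose ``natural'' determining triangle passed through vertex $0$---forces every edge of $G$ positive.

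Finally, to reach a contradiction, I pick an induced copy of $G_1$ in $G$ whose triangle is $\Delta(1, 2, 3)$: here $abc > 0$, so Lemma~\ref{lem:posnegevals} yields a principal submatrix of $W$ with four negative eigenvalues. I then pick an induced copy of $G_2$ in $G$ whose 5-cycle consists entirely of positive edges, so that $efghi > 0$ and Lemma~\ref{lem:posnegevals2} yields a principal submatrix with four positive eigenvalues. Lemma~\ref{lem:4iscontradicting} together with $\alpha(G) = 3$ then contradicts the optimality of $W$, so no optimal weight matrix exists. The main obstacle is the sign-propagation in the third paragraph: although $56$ of the $68$ triangles of $P(17)$ survive in $G$, several edges do not inherit their sign from a single surviving triangle and must be pinned down by multi-step chases---for instance, using 4-4-8 and 8-8-1 triangles to determine the ``diff-$9$'' 8-edges from the already-determined ``diff-$8$'' 8-edges, then backing out the residual 4-edges $(1, 14), (2, 15), (3, 16)$, and finally the 2-edge $(1, 16)$ through $\Delta(1, 3, 16)$.
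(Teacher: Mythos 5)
Your overall strategy mirrors the paper's, and several ingredients are sound: the $\alpha$-criticality argument via the induced $P_4$ of common non-neighbours of an edge, the observation that the $1$-edges of $G=P(17)-0$ form a path so that Lemma \ref{lem:path} normalizes all of them in a single case, and the final appeal to Lemmas \ref{lem:posnegevals}, \ref{lem:posnegevals2} and \ref{lem:4iscontradicting}. But there is a genuine gap at the central step. You assert that every triangle of $G$ lies in an induced copy of $G_1$ inside $G$, ``obtainable\dots by choosing an automorphism whose image avoids vertex $0$,'' and conclude that all $56$ triangles of $G$ share a common sign. No such automorphism need exist: the group $\{\sigma_{ab}\}$ has order $136$ and acts transitively on the $68$ triangles, so only two of its elements carry $\Delta(0,1,2)$ to any given triangle, and for $8$ of the triangles of $G$ --- precisely those all of whose vertices are non-neighbours of $0$, namely $\Delta(5,6,7)$, $\Delta(10,11,12)$, $\Delta(3,5,7)$, $\Delta(10,12,14)$, $\Delta(3,7,11)$, $\Delta(6,10,14)$, $\Delta(5,6,14)$, $\Delta(3,11,12)$ --- both images of the canonical $G_1$ pass through $0$. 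The paper's accounting is that only $40$ of the $68$ copies of $G_1$ survive in $G$, that even after admitting further $7$-vertex induced subgraphs with the same nonzero-determinant property these $8$ triangles belong to none of them, and hence that only $48$ of the $56$ triangles can be forced to share a sign. (Your parenthetical claim that each triangle also lies in a copy of $G_2$ does not help: the sign of $\det(W_2)$ is governed by a $5$-cycle, not a triangle.)

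This matters because your sign propagation then becomes circular: once you assume all $56$ triangles have a common positive sign, ``chaining through the triangles'' is vacuous, and the ``secondary chases'' you mention in the last paragraph are aimed at the wrong difficulty (edges whose natural triangle passes through $0$, rather than triangles whose sign is simply not controlled). With only $48$ controlled triangles, the $1$-$1$-$2$ triangles $\Delta(5,6,7)$ and $\Delta(10,11,12)$ cannot be used, so the $2$-edges $(5,7)$ and $(10,12)$ are not determined at the first pass, and the gaps cascade through the $4$- and $8$-edges; the paper's proof is an explicit multi-stage bookkeeping argument showing that the $48$ controlled triangles nevertheless eventually pin down the sign of every edge, after which copies of $G_1$ and $G_2$ with opposite-signed determinants can be exhibited. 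That bookkeeping --- not the transitivity argument you invoke --- is the substance of the proof, and it is missing from your proposal. Your one genuine improvement is noting that, since the $1$-edges of $G$ form a path rather than a cycle, negating $W$ and renormalizing collapses the positive/negative triangle dichotomy into one case; that observation is correct (conjugation by a $\pm1$ diagonal matrix preserves triangle signs, and negation swaps $n_+$ and $n_-$) and would shorten the paper's case analysis.
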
 

\begin{proof}  The argument is very similar to that of Paley 17.  For the sake of brevity, we include only an outline.  As Paley 17 is vertex transitive, the graph obtained by deleting the zero vertex is isomorphic to the graph obtained by deleting any other single vertex.  Let $G$ be Paley 17 less the zero vertex.  It can be shown that $\alpha(G)=3$ and that $G$ is $\alpha$-critical.   Thus by Observation \ref{obs:noedgedeletion} all entries of an optimal weight matrix of $G$ corresponding to edges are nonzero.
	 
	 There are two key properties of the graph $G_1$ in Section \ref{sec:indsub}.  First, if the entries of a weight matrix corresponding to edges are nonzero, then the determinant is nonzero.  Second, the sign of the determinant (and hence whether the inertia is $(4,3,0)$ or $(3,4,0)$) is determined by the sign of the triangle.  Of the 68 occurrences of the graph $G_1$ in Paley 17, only 40 are still present in $G$.  However, there are various other induced subgraphs of $G$ on 7 vertices which also possess the above two properties.   All but 8 of the 56 triangles of $G$ belong to such an induced subgraph.  (Every vertex of these 8 triangles is not adjacent to 0 in Paley 17.)   Setting aside these 8 triangles and using the same argument found in Lemma \ref{lem:triangleshavesamesign}, the other 48 triangles have the same sign in any optimal weight matrix for $G$.
	
	Suppose by way of contradiction that there exists an optimal weight matrix $M$ of $G$.  Using Lemma \ref{lem:path}, we may assume that all the entries of $M$ corresponding to the 1-edges of $G$ are positive.   The two 1-1-2 triangles which are not part of the 48 are $\Delta(5,6,7)$ and $\Delta(10,11,12)$.  Using the other 1-1-2 triangles, the signs of all the 2-edges of $G$, except $(5,7)$, $(10,12)$, $(1,16)$, are forced negative or positive depending on the desired sign of each triangle.  The two 2-2-4 triangles which are not part of the 48 are $\Delta(3,5,7)$ and $\Delta(10,12,14)$.  Using the other 2-2-4 triangles, the signs of all the 4-edges of $G$, except $(5,9)$, $(8,12)$, $(16,3)$, $(3,7)$, $(15,2)$, $(10,14)$, $(14,1)$, are determined.  The two 4-4-8 triangles which are not part of the 48 are $\Delta(3,7,11)$ and $\Delta(6,10,14)$.  Using the other 4-4-8 triangles we can determine the sign of two 8-edges, $(7,15)$ and $(2,10)$.  The two 8-8-1 triangles which are not part of the 48 are $\Delta(5,6,14)$ and $\Delta(3,11,12)$.  Since we know the sign of two of the 8-edges, we can use them and incident 1-edges, to determine the sign of more 8-edges using 8-8-1 triangles.  While at this stage we can't determine all of them, we can use these new 8-edges, to determine the sign of some 4-edges, using 4-4-8 triangles.	  In this manner it is possible to determine the sign of all edges of $G$ solely based on whether the 48 triangles have positive sign or negative sign.  
	
	In the case that the sign of the 48 triangles are positive, all the entries of $M$ corresponding to edges are determined to be positive.  Note that a copy of graph $G_2$ from Section \ref{sec:indsub} remains in $G$. In the case that the sign of the 48 triangles are negative all entries of $M$ corresponding to edges are negative except for all the 1-edges, and the edges $(16,1)$, $(16,3)$, $(15,2)$, $(14,1)$, $(16,7)$, $(15,6)$, $(14,5)$, $(13,4)$, $(12,3)$, $(11,2)$, $(10,1)$.  Using the copy of $G_2$ on vertices $1,2,3,4,7,13,14$, the corresponding principal submatrix will have positive determinant since the 5-cycle on vertices $1,2,4,13,14$ has negative sign.  Thus in either case $M$ has a principal submatrix with 4 positive eigenvalues and another with 4 negative eigenvalues.  Therefore by Lemma \ref{lem:4iscontradicting}, $M$ is not optimal, a contradiction.\end{proof}

\section{Conclusion}

The goal of this paper was to demonstrate that the answer to Question \ref{qu:the question} is no.  Having done that, we turn our attention to some related unanswered questions.

In \cite{rooney} a method for weighting Cayley graphs is introduced. This method weights the generators of the Cayley graph which corresponds to weighting the 1-regular and 2-regular subgraphs in its decomposition.  As Paley 17 has a 2-factorization, this method weights each of the 4 cycles of length 17.   Using this method it is possible to construct a weight matrix for Paley 17 with inertia $(13,4,0)$.   Assign the 1-edges a weight of 30, the 2-edges a weight of $-22$, the 4-edges a weight of $-12$, and the 8-edges a weight of 7.  Sage determined that this matrix has 13 positive eigenvalues and 4 negative eigenvalues.  Thus for Paley 17 the gap between $\alpha$ and $\min\{17-n_+(W),17-n_-(W)\}$ is one.

\begin{question} How large can the gap be in Equation \ref{eq:inertiabound}?
\end{question}  

It is somewhat unsatisfying that Paley 13 remains undetermined.  While considering this question we were able to show that any non-negative weight matrix for Paley 13 is not optimal.  

\begin{question} Does there exist an optimal weight matrix for Paley 13?
\end{question}

In \cite{elzinga} it is shown that all the induced subgraphs of Paley 13 are inertia tight.  
Theorem \ref{thm:paley17minus0} shows that at least one of the proper induced subgraphs of Paley is not inertia tight.  It seems likely that others are as well.

\begin{question} Which of the proper induced subgraphs of $P(17)$ are inertia tight?
\end{question}

We conclude with the following two questions.

\begin{question}  What other methods exist for determining whether a graph is inertia tight?
\end{question}

\begin{question} What other methods exist for constructing weight matrices with a relatively large number of positive or negative eigenvalues?
\end{question}


\bibliography{inertiabound}
\bibliographystyle{plain}
%
%
%
%
%

\end{document}